\newtheorem{thm}{Theorem}[section]
\newtheorem{lem}[thm]{Lemma}
\newtheorem{prop}[thm]{Proposition}
\theoremstyle{definition}
\newtheorem{prob}[thm]{Problem}
\newtheorem*{ex}{Example}
\DeclareMathOperator{\Res}{Res}
\newcommand{\abs}[1]{\left\lvert#1\right\rvert}%絶対値
\edef\csname f\@Alph\@tempcnta\endcsname{\noexpand\mathfrak{\@Alph\@tempcnta}}
\edef\csname l\@Alph\@tempcnta\endcsname{\noexpand\mathbb{\@Alph\@tempcnta}}
\edef\csname c\@Alph\@tempcnta\endcsname{\noexpand\mathcal{\@Alph\@tempcnta}}
\title{The existence of $T$-numbers in positive characteristic}
\author{Tomohiro Ooto}
\date{}
\begin{document}
\address{Faculty of Pure and Applied Sciences, University of Tsukuba, Tennodai 1-1-1, Tsukuba, Ibaraki, 305-8571, Japan}
\email{ooto@math.tsukuba.ac.jp}
\subjclass[2010]{primary 11J82; secondary 11J61}
\keywords{Diophantine approximation, Mahler's classification, positive characteristic}

\begin{abstract}
	As an analogue of Mahler's classification for real numbers, Bundschuh introduced a classification for Laurent series over a finite field, divided into $A,S,T,U$-numbers.
	It is known that each of $A,S,U$-numbers is nonempty.
	On the other hand, the existence of $T$-numbers is open.
	In this paper, we give an affirmative answer to the problem.
\end{abstract}

\maketitle

\section{Introduction}\label{sec:intro}

From the viewpoint of Diophantine approximation, Mahler \cite{Mahler32} introduced a classification for real numbers, divided into $A$, $S$, $T$, $U$-numbers.
A real number is algebraic over $\lQ$ if and only if it is an $A$-number.
Two algebraically dependent transcendental real numbers are in the same class.
Almost all real numbers are $S$-numbers in the sense of Lebesgue measure.
For example, $e$ is $S$-number \cite{Popken29}.
Liouville numbers are $U$-numbers, for example, the real number $\sum _{n=1}^{\infty }1/2^{n!}$ is $U$-number.
Therefore, $e$ and $\sum _{n=1}^{\infty }1/2^{n!}$ are algebraically independent.
The existence of $T$-numbers had been open for thirty-six years.
Schmidt \cite{Schmidt69} proved that there exist uncountably many $T$-numbers.
The proof of this result is based on a nested interval construction and a generalization of the Roth's theorem by Wirsing \cite{Wirsing71}.
Since Wirsing's theorem is of ineffective nature, we note that Schmidt's construction does not give explicit examples of $T$-numbers.
After that, Schmidt \cite{Schmidt71} and Baker \cite{Baker76} investigated $T$-numbers in more detail.
We refer the reader to \cite{Bugeaud04} for 
Mahler's classification.

Let $p$ be a prime.
Mahler \cite{Mahler34} also introduced an analogue of Mahler's classification for $p$-adic numbers.
Schlickewei \cite{Schlickewei81} proved the existence of $p$-adic $T$-numbers by using a $p$-adic analogue of Schmidt's method \cite{Schmidt69}.
Pejkovi\'{c} \cite{Pejkovic13} investigated $p$-adic $T$-numbers in more detail.

Let $q$ be a power of $p$.
We denote by $\lF _q$ the finite field of $q$ elements, $\lF _q[T]$ the ring of all polynomials over $\lF _q$, $\lF _q(T)$ the field of all rational functions over $\lF _q$, and $\lF _q((T^{-1}))$ the field of all Laurent series over $\lF _q$.
We call an element of $\lF_q((T^{-1}))$ an \textit{algebraic} (resp.\ a \textit{transcendental}) \textit{Laurent series} if the element is algebraic (resp. transcendental) over $\lF_q(T)$.
Analogues to $\lZ, \lQ, $ and $\lR$ are $\lF_q[T], \lF_q(T), $ and $\lF_q((T^{-1}))$, respectively.
Bundschuh \cite{Bundschuh78} introduced an analogue of Mahler's classification in $\lF_q((T^{-1}))$.
As in the real case, he divided Laurent series into $A$, $S$, $T$, $U$-numbers (see Section \ref{sec:mainresult} for the precise definition).
A fundamental question for this classification is whether or not each of the classes is nonempty.
A Laurent series is algebraic over $\lF_q(T)$ if and only if it is an $A$-number (see Proposition \ref{prop:A-number}).
Two algebraically dependent transcendental Laurent series are in the same class (see Proposition \ref{prop:dependent}).
Sprind\u{z}uk \cite{Sprindzuk69} proved that almost all Laurent series are $S$-numbers in the sense of Haar measure.
It is easily seen that there exist $U$-numbers, for example, the Laurent series $\sum _{n=1}^{\infty }T^{-n!}$ is $U$-number.
Therefore, each of the classes except for $T$-numbers is known to be nonempty.
However, the existence of $T$-numbers in $\lF_q((T^{-1}))$ is open (see \cite{Dubois96, Thakur12}).

In this paper, we solve this open problem.

\begin{thm}\label{thm:intro}
	There exist uncountably many $T$-numbers in $\lF_q((T^{-1}))$.
\end{thm}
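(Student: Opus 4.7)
The plan is to adapt Schmidt's 1969 nested-interval construction \cite{Schmidt69} to the setting of $\lF_q((T^{-1}))$. Writing $w_n(\xi)$ for the exponent of approximation of $\xi$ by polynomials in $\lF_q[T][X]$ of degree at most $n$ and $w_n^*(\xi)$ for the exponent of approximation of $\xi$ by algebraic Laurent series of degree at most $n$, a $T$-number is a transcendental $\xi$ with $w_n(\xi)<\infty$ for every $n$ but $\sup_n w_n(\xi)/n=\infty$. So I need to produce transcendental $\xi$'s which are well approximable by algebraic Laurent series of unbounded degree (to defeat $S$) while remaining badly approximable at each fixed degree (to defeat $U$), and to do so with enough freedom to obtain uncountably many.

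The first preparatory step is to establish a Wirsing-type inequality in positive characteristic: for every transcendental $\xi\in\lF_q((T^{-1}))$ and every $n\geq 1$, an inequality of the shape $w_n^*(\xi)\geq f(w_n(\xi),n)$ with $f$ tending to infinity with its first argument. In the real case this is proved by applying Liouville's inequality to the resultant of two near-best polynomial approximants. In our setting the same resultant argument works, provided one restricts to separable minimal polynomials; inseparable competitors are handled by factoring out their Frobenius part via the operator $\insep$, so that $\Res$ and $\Disc$ remain nonzero and the usual lower bounds apply.

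The second step is the construction itself. Fix a rapidly increasing sequence of degrees $d_1<d_2<\cdots$ and a rapidly increasing sequence of exponents $\tau_1<\tau_2<\cdots$ with $\tau_k/d_k\to\infty$. Inductively choose separable algebraic Laurent series $\alpha_k$ of degree exactly $d_k$ and large height $H_k$, together with closed disks $D_k\subset D_{k-1}$ centred at $\alpha_k$ whose radius is of the form $H_k^{-\tau_k}$, small enough that every $\xi\in D_k$ satisfies $w_{d_k}(\xi)\geq\tau_k$. Branching the choice of $\alpha_k$ at each stage (for instance by perturbing one coefficient of its minimal polynomial) yields $2^{\aleph_0}$ compatible sequences of disks, hence uncountably many limits $\xi=\bigcap_k D_k$, each automatically transcendental because $\tau_k/d_k\to\infty$ rules out $\xi\in\lF_q(T)[\alpha]$ of any fixed degree by the Liouville inequality.

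The main obstacle, and the heart of the proof, is the upper bound $w_n(\xi)<\infty$ for each fixed $n$. For any polynomial $P$ of degree $\leq n$ with $|P(\xi)|$ very small, one associates (via the Wirsing-type bound prepared in the first step) an algebraic approximant $\beta$ of degree $\leq n$ with $|\xi-\beta|$ correspondingly small. For large $k$ with $d_k>n$, the resultant $\Res(P_\beta,P_{\alpha_k})$ is a nonzero polynomial in $\lF_q[T]$ (this is where separability of $\alpha_k$ is essential), and applying the Liouville inequality to this resultant, combined with the prescribed closeness $|\xi-\alpha_k|\leq H_k^{-\tau_k}$, forces the height of $\beta$, and hence the height of $P$, to grow quantitatively with $|P(\xi)|^{-1}$. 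Choosing the gap between $d_{k-1}$ and $d_k$ large enough and $\tau_{k-1}$ small compared to the approximation needed to handle degree $\leq d_{k-1}$ competitors ensures that this upper bound works uniformly for all $n<d_k$ and all $P$, yielding $w_n(\xi)<\infty$. Calibrating the two competing sequences $(d_k)$ and $(\tau_k)$ so that the Wirsing-type lower bound and the resultant-based upper bound are compatible is the technically delicate point; once this calibration is done, $\xi$ is a $T$-number, and the branching produces uncountably many such $\xi$.
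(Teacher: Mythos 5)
Your plan transplants Schmidt's nested-disk construction, but the step where you prove $w_n(\xi)<\infty$ for fixed $n$ does not survive the transplant, and this is precisely the point where Schmidt's proof genuinely needs Wirsing's theorem rather than Liouville's inequality. In your construction the approximant protecting $\xi$ at stage $k$ is a single $\alpha_k$ of degree $d_k\to\infty$, so the resultant/Liouville bound you invoke reads $\abs{\beta-\alpha_k}\geq H(\beta)^{-d_k}H(\alpha_k)^{-n}$: the exponent of $H(\beta)$ is $d_k$, not a quantity depending only on $n$. Tracing through the argument, a competitor $\beta$ of degree $\leq n$ with $\abs{\xi-\beta}<\abs{\xi-\alpha_k}$ is only forced to satisfy $H(\beta)\geq H_k^{(\tau_k-n)/d_k}$, and converting this into $w_n^*(\xi)\leq C(n)$ requires, for every large $k$, roughly $\tau_k\log H_k\leq C(n)\,(\tau_{k-1}/d_{k-1})\log H_{k-1}$; since you also need $\tau_k$ increasing and $\tau_k/d_k\to\infty$ to force $w(\xi)=\infty$, this yields $\tau_k\leq C(n)\tau_{k-1}/d_{k-1}<\tau_{k-1}$ for large $k$, a contradiction. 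So no calibration of $(d_k)$ and $(\tau_k)$ can close the argument with Liouville alone. Schmidt escapes this because Wirsing's theorem bounds the approximation of an algebraic number by algebraic numbers of degree $\leq n$ with exponent about $2n+2$, \emph{independent of the degree of the target}; that is the statement you would need, and it is exactly what fails in $\lF_q((T^{-1}))$ --- Mahler's example $\alpha=\sum_{n\geq 1}T^{-r^n}$ has $w_1(\alpha)=r-1$, so even the Roth analogue ($n=1$) is false. Your ``Wirsing-type inequality'' in step one (relating $w_n^*(\xi)$ and $w_n(\xi)$ for the same transcendental $\xi$) is a different and true statement (it is Lemma \ref{lem:hikaku}), but it is not a substitute for the missing theorem about approximation \emph{of} algebraic numbers.

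The paper's actual proof is built to avoid this obstacle. It takes $\xi=\sum_j\alpha_j$ with $\alpha_j=\sum_n T^{-r_j^n}$, and uses the Frobenius relation $\alpha^r=\alpha-T^{-r}$ to show that, for each degree level $r_j$, there is an entire infinite family of approximants $\alpha(j,k)$ ($k\geq 1$) all of degree $\leq r_j$, with explicitly controlled heights, gaps, and approximation quality. Because every protecting approximant now has the \emph{same} bounded degree $r_j$ as the level being controlled, the Liouville exponent is the fixed number $r_j$, and Lemma \ref{lem:applio} delivers both $w_{r_j}^*(\xi)\geq r_{j+1}^2/r_j-1$ (whence $w^*(\xi)=\infty$) and $w_{r_j}^*(\xi)<\infty$ (whence $w_n^*(\xi)<\infty$ for all $n$ by monotonicity). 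If you want to rescue your outline, you would have to equip each fixed degree with its own unbounded ladder of approximants of that degree, which is essentially what the explicit construction accomplishes; the nested-disk formalism with one approximant per stage cannot be made to work in positive characteristic.
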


We emphasize that our method allows us to construct explicit examples of $T$-numbers in $\lF_q((T^{-1}))$.
For example, we define a sequence $(a_n)_{n\geq 0}$ over $\lF_2$ by
\begin{equation*}
a_n=\begin{cases}
1 & \text{if } n=2^{4^k\ell } \text{ for some integer } k\geq 0 \text{ and odd integer } \ell \geq 1,\\
0 & \text{otherwise}.
\end{cases}
\end{equation*}
Then the Laurent series $\sum_{n=0}^{\infty }a_nT^{-n}\in \lF_2((T^{-1}))$ is $T$-number.

In the field of Laurent series over a finite field, Mahler \cite{Mahler49} proved that an analogue of Roth's theorem does not hold and constructed a family of explicit counterexamples.
Therefore, we prove Theorem \ref{thm:intro} in a different way to Schmidt's proof.
Our strategy for proving Theorem \ref{thm:intro} is that we construct a Laurent series with Mahler's counterexamples and show that the Laurent series is $T$-number by using a Liouville inequality.

This paper is organized as follows.
In Section \ref{sec:mainresult}, we recall the precise definition of Mahler's classification in $\lF_q((T^{-1}))$ and another classification which is called Koksma's classification.
We also state the main results in this paper.
In Section \ref{sec:pre}, we prepare some lemmas for the proof of the main result.
In Section \ref{sec:proof}, we prove the main results and Theorem \ref{thm:intro}. 
In Appendix \ref{sec:app}, we prove basic properties of Mahler's classification stated in Section \ref{sec:intro} and \ref{sec:mainresult}.

\section{Notation and Main result}\label{sec:mainresult}

For a Laurent series $\xi \in \lF_q((T^{-1})) \setminus \{ 0\}$, we can write $\xi = \sum _{n=N}^{\infty }a_n T^{-n},$ where $N\in \lZ $, $a_n \in \lF _q$ for all integers $n\geq N$, and $a_N \neq 0$.
We define an absolute value on $\lF_q((T^{-1}))$ by $|0|:=0$ and $|\xi |:=q^{-N}$.
The absolute value can be uniquely extended on the algebraic closure of $\lF_q((T^{-1}))$.
We continue to write $|\cdot |$ for the extended absolute value.
We denote by $(\lF_q[T])[X]$ the set of all polynomials in $X$ over $\lF_q[T]$.
The \textit{height} of a polynomial $P(X)\in (\lF_q[T])[X]$, denoted by $H(P)$, is defined to be the maximum of the absolute values of the coefficients of $P(X)$.
We denote by $(\lF_q[T])[X]_{\min }$ the set of all non-constant, irreducible, and primitive polynomials in $(\lF_q[T])[X]$ whose leading coefficients are monic polynomials in $T$.
For an algebraic number $\alpha \in \overline{\lF_q(T)}$, there exists a unique polynomial $P(X)\in (\lF_q[T])[X]_{\min }$ such that $P(\alpha )=0$. 
We call the polynomial $P(X)$ the \textit{minimal polynomial} of $\alpha $.
The \textit{height} (resp.\ the \textit{degree}) of $\alpha $, denoted by $H(\alpha )$ (resp.\ $\deg \alpha $), is defined to be the height of $P(X)$ (resp.\ the degree of $P(X)$).

Let $n\geq 1$ be an integer and $\xi $ be in $\lF_q((T^{-1}))$.
We denote by $w_n(\xi )$ (resp.\ $w_n^{*}(\xi )$) the supremum of the real numbers $w$ (resp.\ $w^{*}$) which satisfy 
\begin{equation*}
	0
	<|P(\xi )|
	\leq H(P)^{-w}
	\quad (\text{resp.\ } 
	0
	<|\xi -\alpha |
	\leq H(\alpha )^{-w^{*}-1})
\end{equation*}
for infinitely many $P(X)\in (\lF_q[T])[X]$ of degree at most $n$ (resp.\ $\alpha \in \overline{\lF_q(T)}$ of degree at most $n$).
We put
\begin{equation*}
	w(\xi ) 
	:=\limsup _{n\rightarrow \infty }\frac{w_n(\xi )}{n},\quad 
	w^{*}(\xi ) 
	:=\limsup _{n\rightarrow \infty }\frac{w_n ^{*}(\xi )}{n}.
\end{equation*}
We say that a Laurent series $\xi \in \lF_q((T^{-1}))$ is an
\begin{gather*}
	A\text{-\textit{number} if } w(\xi )=0;\\
	S\text{-\textit{number} if } 0<w(\xi )<+\infty ;\\
	T\text{-\textit{number} if } w(\xi )=+\infty \text{ and } w_n(\xi )<+\infty \text{ for all integers } n\geq 1;\\
	U\text{-\textit{number} if } w(\xi )=+\infty \text{ and } w_n(\xi )=+\infty \text{ for some integer } n\geq 1.
\end{gather*}
This classification is called \textit{Mahler's classification}.
Replacing $w_n$ and $w$ with $w_n ^{*}$ and $w^{*}$, we define $A^{*}$, $S^{*}$, $T^{*}$, and $U^{*}$-numbers as the above.
This classification was first introduced by Bugeaud \cite[Section 9]{Bugeaud04} and is called \textit{Koksma's classification}.
$\xi $ is an $A$-number if and only if it is an $A^{*}$-number (see Proposition \ref{prop:A-number}).
The following two statements are in \cite[p.145]{Ooto17}.
If $\xi $ is an $S$-number, then it is an $S^{*}$-number.
$\xi $ is a $U$-number if and only if it is a $U^{*}$-number.
Therefore, we deduce that if $\xi $ is a $T^{*}$-number, then it is a $T$-number.

Let $\xi \in \lF_q((T^{-1}))$ be a $T$-number.
The \textit{type} of $\xi $, denoted by $t(\xi )$, is defined to be
\begin{equation*}
	t(\xi )
	= \limsup_{n\rightarrow \infty }\frac{\log w_n(\xi )}{\log n}.
\end{equation*}
Note that we see $t(\xi )\in [1,+\infty ]$ by Lemma \ref{lem:MKlower}.
Replacing $w_n$ with $w_n ^{*}$, for $T^{*}$-number $\xi \in \lF_q((T^{-1}))$, we define the $*$-type of $\xi $, denoted by $t^{*}(\xi )$ as the above.
Note that we also see $t^{*}(\xi )\in [1,+\infty ]$ by Lemma \ref{lem:MKlower}.

Let $r$ be a power of $p$.
We put $\alpha := \sum_{n=1}^{\infty }T^{-r^n}.$
Mahler \cite{Mahler49} showed that $\alpha $ is the algebraic Laurent series of degree $r$, and satisfies $w_1(\alpha )=r-1$ and 
\begin{equation}\label{eq:equation}
\alpha ^r = \alpha -T^{-r}. 
\end{equation}
Note that, in the case of $r>2$, the algebraic Laurent series $\alpha $ is the first counterexample of the Roth's theorem in $\lF_q((T^{-1}))$, that is, $\alpha $ does not satisfy $w_1(\alpha )=1$.

Let ${\bf m}=(m_j)_{j\geq 0}$ be an integer sequence with $m_0=1, m_j\geq 2$ for all integers $j\geq 1$.
For an integer $j\geq 0$, we put $r_j:=r^{m_0m_1\cdots m_j}$ and $\alpha _j(r,{\bf m}):=\sum_{n=1}^{\infty } T^{-r_j ^n}.$
We define a Laurent series $\xi (r,{\bf m})$ by
\begin{equation*}
	\xi (r,{\bf m}) 
	= \sum_{j=0}^{\infty }\alpha _j(r,{\bf m}).
\end{equation*}
Note that since $\lim_{j\rightarrow \infty }\abs{\alpha _j(r,{\bf m})} = 0$, the Laurent series $\xi (r,{\bf m})$ converges.

\begin{ex}
	Let $p=r=2$ and $m_j=2$ for all $j\geq 1$.
	We write $\xi (r,{\bf m})=\sum_{n=0}^{\infty }a_nT^{-n}$.
	Then we have
	\begin{equation*}
	a_n=\begin{cases}
	1 & \text{if } n=2^{4^k\ell } \text{ for some integer } k\geq 0 \text{ and odd integer } \ell \geq 1,\\
	0 & \text{otherwise}.
	\end{cases}
	\end{equation*}
\end{ex}

Our main result of this paper is the following theorem.

\begin{thm}\label{thm:main}
	For any $r$ ans ${\bf m}$ defined as above, the Laurent series $\xi (r,{\bf m})$ are $T$-numbers and $T^{*}$-numbers.
\end{thm}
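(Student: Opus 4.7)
The plan is to take the partial sums $\xi_k := \sum_{j=0}^{k} \alpha_j(r,\mathbf{m})$ as explicit algebraic approximations to $\xi := \xi(r,\mathbf{m})$ and to combine their rapid convergence with a Liouville-type gap principle. I would proceed in three stages: estimate the degree and height of each $\xi_k$, deduce $w(\xi)=w^*(\xi)=+\infty$ from the sequence $(\xi_k)$, and finally use a Liouville inequality to show $w_n(\xi)<+\infty$ (and $w_n^*(\xi) < +\infty$) for every $n$.

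First, since each $\alpha_j$ satisfies the analogue $\alpha_j^{r_j} = \alpha_j - T^{-r_j}$ of (\ref{eq:equation}), it is algebraic of degree $r_j$, and $\xi_k$ lies in the compositum $\mathbb{F}_q(T)(\alpha_0,\dots,\alpha_k)$. By iterating resultants against the polynomials $X^{r_j}-X+T^{-r_j}$ one obtains explicit bounds $\deg \xi_k \leq D_k$ and $H(\xi_k) \leq q^{H_k}$ with $D_k, H_k$ polynomial in $r_k$. The tail estimate is immediate from the ultrametric: $|\xi - \xi_k| = |\sum_{j>k}\alpha_j| \leq q^{-r_{k+1}} = q^{-r_k^{m_{k+1}}}$. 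Combining these bounds yields $|\xi-\xi_k| \leq H(\xi_k)^{-r_k^{m_{k+1}-1}/c-1}$ for $k$ large, hence $w^*_{D_k}(\xi) \geq r_k^{m_{k+1}-1}/c - 1$. Dividing by $D_k$ (linear in $r_k$) and letting $k \to \infty$ forces $w^*(\xi) = +\infty$, and a fortiori $w(\xi) = +\infty$.

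The crucial point is showing $w_n(\xi) < +\infty$ for each fixed $n$, since this is precisely what distinguishes $T$-numbers from $U$-numbers. Fix $n$ and suppose $\alpha$ is an algebraic number of degree at most $n$ with $|\xi - \alpha| \leq H(\alpha)^{-w^*-1}$ for some large real $w^*$. Choose the index $k$ so that $r_k$ is calibrated with $H(\alpha)$ (small enough that $\xi_k$ is a natural comparison, large enough that $|\xi - \xi_k|$ is negligible against $|\xi - \alpha|$). If $\alpha \neq \xi_k$, a Liouville inequality supplies a lower bound of the shape
\begin{equation*}
    |\alpha - \xi_k| \geq c(n, r_k)\, H(\alpha)^{-r_k} H(\xi_k)^{-n},
\end{equation*}
while the ultrametric triangle inequality gives $|\alpha - \xi_k| \leq \max(|\xi-\alpha|, |\xi-\xi_k|)$. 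Comparing these two estimates caps $w^*$ by an explicit function of $n$ alone, so $w_n^*(\xi) < +\infty$; the standard transfer inequalities between $w_n$ and $w_n^*$ then yield $w_n(\xi) < +\infty$.

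The main obstacle is this last stage: for each candidate $\alpha$, the index $k$ must be chosen so that the Liouville gap dominates both $|\xi-\alpha|$ and $|\xi-\xi_k|$, and the resulting bound on the approximation exponent must remain finite uniformly in $\alpha$. This calibration is possible precisely because the multiplicative growth $r_{k+1} = r_k^{m_{k+1}}$ makes the sequence $(r_k)$ sparse enough on a logarithmic scale that between consecutive levels $r_k$ and $r_{k+1}$ there is ample room to place $H(\alpha)$, and it is here that the assumption $m_j \geq 2$ is used in an essential way.
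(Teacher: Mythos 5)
Your overall strategy (explicit algebraic approximants plus a Liouville gap principle) is the right one and is indeed the strategy of the paper, but the family of approximants you propose --- the partial sums $\xi_k=\sum_{j=0}^{k}\alpha_j$ --- is too sparse to carry out the crucial step, and the argument as written does not close. To cap $w_n^{*}(\xi)$ you need, for each fixed degree level, approximants whose heights are dense on a logarithmic scale, i.e.\ with bounded ratios of logarithms of consecutive heights. Here $\xi_k$ has degree and height controlled by $r_k$ (in fact $\deg\xi_k\le r_k$ and $H(\xi_k)\le q^{r_k}$, because $\alpha_n^{r_k}=\alpha_n-\sum_{i=1}^{M(n+1,k)}T^{-r_n^i}$ for all $n\le k$, so the whole partial sum satisfies a single equation $X^{r_k}-X+(\cdots)=0$; your iterated resultants would give far worse, though still finite, bounds), while the next approximant has height about $q^{r_{k+1}}=q^{r_k^{m_{k+1}}}$, so the log-ratio $r_{k+1}/r_k$ tends to infinity. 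Now take $\alpha$ of degree $n$ with $\log_q H(\alpha)$ in the middle of the gap between $r_k$ and $r_{k+1}$. The only usable comparison is with $\xi_k$, and Liouville gives $|\xi-\alpha|\ge H(\alpha)^{-r_k}H(\xi_k)^{-n}$, an exponent of order $r_k$; since $k\to\infty$ as $H(\alpha)\to\infty$, the resulting bound on $w_n^{*}(\xi)$ is $\limsup_k r_k=+\infty$, i.e.\ no bound at all. Your closing remark that the sparseness of $(r_k)$ is what makes the calibration possible has it backwards: sparseness is precisely the obstruction. The paper's way around it is to manufacture, for each $j$, an infinite family $\alpha(j,k)$ of degree at most $r_j$ --- the partial sum plus a truncation of the tail $\sum_{n>j}\alpha_n$ written in powers of $T^{-r_{j+1}}$ --- whose heights $q^{r_{j+1}^k r_j}$ have bounded log-ratio for fixed $j$; Lemma \ref{lem:applio} then yields $w_{r_j}^{*}(\xi)<\infty$.

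There is a second gap, in the lower bound. From $|\xi-\xi_k|=q^{-r_{k+1}}$ and $H(\xi_k)\le q^{r_k}$ you get $w_{r_k}^{*}(\xi)\ge r_{k+1}/r_k-1$, and dividing by the degree $r_k$ gives $r_k^{m_{k+1}-2}-1/r_k$. When $m_j=2$ for all $j$ --- which is allowed, and is exactly the paper's explicit example over $\mathbb{F}_2$ --- this tends to $1$, not to $+\infty$, so you have not shown $w^{*}(\xi)=+\infty$ in general. The paper's approximants are one order better: choosing $k$ so that the next digit $b(j,k+1)$ vanishes gives $|\xi-\alpha(j,k)|=q^{-r_{j+1}^{k+2}}$ against height $q^{r_{j+1}^k r_j}$, hence exponent ratio $r_{j+1}^2/r_j$ and $w_{r_j}^{*}(\xi)/r_j\ge r_{j+1}^2/r_j^2-1/r_j\to\infty$ for every admissible ${\bf m}$.
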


We estimate type and $*$-type of the Laurent series $\xi (r,{\bf m})$.

\begin{thm}\label{thm:main2}
	For any $r$ ans ${\bf m}$ defined as above, the Laurent series $\xi (r,{\bf m})$ satisfies
	\begin{gather}\label{eq:main1}
		\limsup_{j\rightarrow \infty }(2m_j-1)
		\leq t^{*}(\xi (r,{\bf m}))
		\leq \limsup_{j\rightarrow \infty }(m_j+2m_jm_{j+1}\cdots m_{j+p}),\\
		\limsup_{j\rightarrow \infty }(2m_j-1)
		\leq t(\xi (r,{\bf m}))
		\leq \limsup_{j\rightarrow \infty }(2m_j+2m_jm_{j+1}\cdots m_{j+p}).\label{eq:main2}
	\end{gather}
	Furthermore, if $m_j\geq 3$ for all sufficiently large $j\geq 1$, then we have
	\begin{gather}\label{eq:main3}
		t^{*}(\xi (r,{\bf m}))
		\leq \limsup_{j\rightarrow \infty }(m_j+2m_jm_{j+1}),\\
		t(\xi (r,{\bf m})) 
		\leq \limsup_{j\rightarrow \infty }(2m_j+2m_jm_{j+1}).\label{eq:main4}
	\end{gather}
\end{thm}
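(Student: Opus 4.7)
The plan is to prove both bounds by studying the partial sums $\beta_J := \sum_{i=0}^{J}\alpha_i(r,{\bf m})$ together with certain rational perturbations, combined with a non-archimedean Liouville inequality.

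I would begin by deriving the algebraic structure of $\beta_J$. Iterating Frobenius on Mahler's defining equation $\alpha_i^{r_i}=\alpha_i-T^{-r_i}$ yields $\alpha_i^{r_J}=\alpha_i-P_i^J$ with $P_i^J:=\sum_{n=1}^{m_{i+1}\cdots m_J}T^{-r_i^n}\in\lF_q[T,T^{-1}]$; summing over $i\leq J$ gives $\beta_J^{r_J}-\beta_J+Q_J=0$ where $Q_J:=\sum_{i=0}^J P_i^J$. Clearing the denominator by $T^{r_J}$ produces a polynomial in $\lF_q[T][X]$ of degree $r_J$ and height $\leq q^{r_J}$ annihilating $\beta_J$, so by Gauss's lemma $\deg\beta_J\leq r_J$ and $H(\beta_J)\leq q^{r_J}$. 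An ultrametric calculation gives $\abs{\xi(r,{\bf m})-\beta_J}=\abs{\alpha_{J+1}}=q^{-r_{J+1}}$, since no exponent $r_i^n$ with $i\geq J+2$ can coincide with $r_{J+1}$ under the hypothesis $m_j\geq 2$.

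For the lower bounds I would refine $\beta_J$ by adding rational perturbations $\gamma_J := \beta_J + \sum_{i=1}^L c_i T^{-r_{J+1}^i}$, with constants $c_i\in\lF_q$ chosen inductively so that the contributions from $\alpha_{J+1},\alpha_{J+2},\dots$ at each of the exponents $r_{J+1}^i$ cancel modulo $p$. The positions $r_{J+s}^n$ appearing in $\xi-\gamma_J$ are enumerated explicitly: the coefficient at $r_{J+1}^k$ equals the cardinality of $\{s\geq 1:m_{J+2}\cdots m_{J+s}\mid k\}$ reduced modulo $p$. Choosing $L$ optimally balances the height growth $H(\gamma_J)\leq q^{r_J r_{J+1}^L}$ against the decay $\abs{\xi-\gamma_J}=q^{-r_{J+1}^{L+2}}$ (the exponent being the first position surviving all enforced cancellations), and produces $w^*_{r_J}(\xi)\geq r_J^{2m_{J+1}-1}$ along an infinite subsequence of $J$, yielding $t^*(\xi(r,{\bf m}))\geq\limsup_j(2m_j-1)$. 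The matching lower bound on $t$ then follows by evaluating the explicit polynomial $T^{r_J}(X^{r_J}-X+Q_J)$ (which vanishes at $\beta_J$ and has controlled height) at $\xi$.

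For the upper bounds I would invoke the non-archimedean Liouville inequality: for distinct algebraic $\beta,\gamma\in\overline{\lF_q(T)}$, $\abs{\beta-\gamma}\geq c\,H(\beta)^{-\deg\gamma}H(\gamma)^{-\deg\beta}$ with $c$ depending only on $(q,T)$. Given any algebraic $\gamma$ with $\abs{\xi-\gamma}$ small, I would locate the unique $J$ with $\abs{\xi-\beta_J}\leq\abs{\xi-\gamma}<\abs{\xi-\beta_{J-1}}$; ultrametricity forces $\abs{\beta_J-\gamma}\leq q^{-r_J}$, and combining with the Liouville bound on $(\beta_J,\gamma)$ translates into an upper bound on $H(\gamma)^{-1}$ in terms of $r_J$, $r_{J+1}$, and $\deg\gamma$, hence on $w^*_n(\xi)$. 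The principal obstacle throughout is the careful tracking of cancellations modulo $p$, both in constructing the optimal $\gamma_J$ and in the Liouville step. For the upper bound the factor $m_j m_{j+1}\cdots m_{j+p}$ arises because specific combinations of consecutive cancellations in $Q_J,Q_{J+1},\ldots,Q_{J+p}$ can force the Liouville comparison to skip several levels, effectively pairing $\gamma$ with $\beta_{J-p}$ instead of $\beta_{J-1}$; the hypothesis $m_j\geq 3$ precludes this degeneracy, so only the twofold factor $m_j m_{j+1}$ remains.
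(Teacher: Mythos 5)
Your lower-bound argument is essentially the paper's own: the perturbed approximants $\gamma_{J,L}=\beta_J+\sum_{i=1}^{L}c_iT^{-r_{J+1}^{i}}$ are exactly the paper's $\alpha(J,L)$, with the same degree bound $r_J$, height bound $q^{r_Jr_{J+1}^{L}}$, and the same mod-$p$ digit analysis pinning down $\abs{\xi-\gamma_{J,L}}$; this correctly gives $w_{r_J}^{*}(\xi)\geq r_{J+1}^{2}/r_J-1$ and hence $t^{*}\geq\limsup_j(2m_j-1)$. (For $t\geq t^{*}$ you only need $w_n\geq w_n^{*}$; evaluating $T^{r_J}(X^{r_J}-X+Q_J)$ at $\xi$ in fact yields only the weaker exponent $m_{J+1}-1$, since $\abs{P(\xi)}=q^{r_J-r_{J+1}}$ against $H(P)=q^{r_J}$.)

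The upper bound, however, has a genuine gap. You compare an arbitrary algebraic $\gamma$ only with the partial sums $\beta_J$, choosing $J$ by $\abs{\xi-\beta_J}\leq\abs{\xi-\gamma}<\abs{\xi-\beta_{J-1}}$. This cannot work because the $\beta_J$ are far too sparse: Liouville against $\beta_J$ gives $\abs{\xi-\gamma}\geq\abs{\beta_J-\gamma}\geq q^{-nr_J}H(\gamma)^{-r_J}$, while the only control on $r_J$ your choice of $J$ provides is $q^{r_J}<\abs{\xi-\gamma}^{-1}$; writing $\abs{\xi-\gamma}=H(\gamma)^{-w-1}$ and eliminating $r_J$, the two inequalities collapse to the vacuous statement $1\leq n+\log_qH(\gamma)$, so no finite bound on $w_d^{*}(\xi)$ results. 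The point is that $\log H(\beta_{J+1})/\log H(\beta_J)=r_{J+1}/r_J=r_J^{m_{J+1}-1}\to\infty$, whereas the gap argument needs a family of approximants of \emph{fixed} degree $\leq r_J$ whose consecutive log-heights have bounded ratio. The paper therefore uses the whole interpolating family $\alpha(J,L)$ in the upper bound too, restricted to the indices $L\in K_J$ where $\abs{\xi-\alpha(J,L)}$ is controlled from both sides, and feeds it into Lemma \ref{lem:applio}, calibrating the comparison approximant to $H(\gamma)$ rather than to $\abs{\xi-\gamma}$. This also shows your explanation of the factor $m_jm_{j+1}\cdots m_{j+p}$ is off target: it measures the gaps (at most $2M(j+2,j+p)$) between consecutive admissible indices in $K_J$ --- a mod-$p$ digit phenomenon within a single level $J$ --- not a ``skipping'' from $\beta_{J-1}$ down to $\beta_{J-p}$. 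Similarly, $m_j\geq3$ is used to ensure $r_{j+1}/r_j>r_j$, so that the one-sided estimate $\abs{\xi-\alpha(J,L)}\leq q^{-r_{J+1}^{L+1}}$, valid for \emph{every} $L$, already yields a positive $\delta$ in Lemma \ref{lem:applio} with $\theta=r_{j+1}$, which is what removes the dependence on $m_{j+2},\dots,m_{j+p}$.
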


In the last part of this section, we mention a problem concerning Theorem \ref{thm:main2}.

\begin{prob}\label{prob:anyvalue}
	For any $t\in [1,\infty ]$, does there exist a $T$-number $\xi $ (resp.\ $T^{*}$-number $\eta $) such that $t(\xi )=t$ (resp.\ $t^{*}(\eta )=t$)?
\end{prob}

If $\limsup_{j\rightarrow \infty }m_j = \infty $, then we have $t(\xi (r,{\bf m}))=t^{*}(\xi (r,{\bf m}))=\infty $ by Theorem \ref{thm:main2}.
Therefore, Theorem \ref{thm:main2} gives a partial answer to Problem \ref{prob:anyvalue} in the case of $t=\infty $.

\section{Preliminaries}\label{sec:pre}

\begin{lem}\label{lem:hikaku}
	Let $n\geq 1$ be an integer and $\xi $ be in $\lF_q((T^{-1}))$.
	Let $k\geq 0$ be an integer with $p^k\leq n<p^{k+1}$.
	Then we have
	\begin{equation*}
		\frac{w_n(\xi )}{p^k}-n+\frac{2}{p^k}-1
		\leq w_n^{*}(\xi )
		\leq w_n(\xi ).
	\end{equation*}
\end{lem}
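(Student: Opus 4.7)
The plan is to prove the two inequalities separately. For the upper bound $w_n^*(\xi)\leq w_n(\xi)$, I would start from an algebraic $\alpha\in\overline{\lF_q(T)}$ of degree at most $n$ with $0<|\xi-\alpha|\leq H(\alpha)^{-w^*-1}$, take its minimal polynomial $P(X)=\sum a_iX^i$, and expand
\begin{equation*}
P(\xi)=P(\xi)-P(\alpha)=(\xi-\alpha)\sum_{i=1}^{\deg P}a_i\bigl(\xi^{i-1}+\xi^{i-2}\alpha+\cdots+\alpha^{i-1}\bigr).
\end{equation*}
Bounding the second factor in the ultrametric setting by $H(P)\max(1,|\xi|)^{n-1}$, valid once $|\xi-\alpha|$ is small enough that $|\alpha|$ is controlled by $|\xi|$, gives $|P(\xi)|\leq C_{\xi,n}|\xi-\alpha|H(P)\leq C_{\xi,n}H(P)^{-w^*}$. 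Since $H(P)=H(\alpha)\to\infty$ along the sequence of approximations, the constant $C_{\xi,n}$ is absorbed when passing to the limsup in the definition of $w_n$, yielding $w_n(\xi)\geq w_n^*(\xi)$.

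For the lower bound I start with a polynomial $P$ of degree at most $n$ with $|P(\xi)|\leq H(P)^{-w}$ for $w$ close to $w_n(\xi)$. After extracting an irreducible factor responsible for most of the smallness (via a Gauss-lemma / Mignotte-type factorization height bound over $(\lF_q[T])[X]$), I may assume $P$ is irreducible. In characteristic $p$ any such irreducible $P$ decomposes uniquely as $P(X)=Q(X^{p^s})$ with $Q$ separable irreducible, and every root of $P$ has multiplicity exactly $p^s$; since $\deg P\leq n<p^{k+1}$, we have $p^s\leq p^k$. Let $\alpha_1,\ldots,\alpha_d$ denote the distinct roots ($d=\deg P/p^s$), let $\alpha=\alpha_1$ be the closest to $\xi$, and set $\beta:=\alpha^{p^s}$. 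Once $|\xi-\alpha|<\min_{j\geq 2}|\alpha-\alpha_j|$ the ultrametric rule forces $|\xi-\alpha_j|=|\alpha-\alpha_j|$ for $j\geq 2$, and combining this with the Frobenius identity $(\alpha-\alpha_j)^{p^s}=\beta-\beta_j$ and $Q'(\beta)=a\prod_{j\geq 2}(\beta-\beta_j)$ (where $a$ is the leading coefficient) produces the key equation
\begin{equation*}
|\xi-\alpha|^{p^s}=\frac{|P(\xi)|}{|Q'(\beta)|}.
\end{equation*}

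The remaining step is a sharp Liouville-type lower bound of the shape $|Q'(\beta)|\geq H(P)^{-(np^k-2)}$, which I would derive from the resultant identity $\Res(Q,Q')=a^{d-1}\prod_i Q'(\beta_i)$, the inequality $|\Res(Q,Q')|\geq 1$ (nonzero element of $\lF_q[T]$ by separability of $Q$), and the non-archimedean Mahler-measure identity $|a|\prod_j\max(1,|\beta_j|)=H(Q)=H(P)$ which furnishes tight upper bounds on the conjugate factors $|Q'(\beta_j)|$ for $j\neq 1$. Extracting the $p^s$-th root of the displayed identity, using $p^s\leq p^k$ together with $H(\alpha)=H(P)$, then produces $|\xi-\alpha|\leq H(\alpha)^{-(w/p^k-n+2/p^k)}$, i.e.\ a witness with $w^*=w/p^k-n+2/p^k-1$, which gives the stated lower bound after passing to limsups.

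The main obstacle is the sharpness of the Liouville estimate: the crude bound $|Q'(\beta_j)|\leq |a|(H/|a|)^{d-1}$ for each conjugate introduces an extra factor of roughly $n$ in the exponent, and only by using the Mahler-measure identity to distribute the $\max(1,|\beta_j|)$ factors across conjugates does one reach the clean $-n+2/p^k$ rather than an $O(n^2)$ error. The reduction to an irreducible factor of $P$ also requires the effective exponent on the factor to remain close to $w$, which follows from classical factorization-height bounds.
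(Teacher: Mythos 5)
The paper does not actually prove this lemma---it is quoted from \cite[Proposition 5.6]{Ooto17}---so the comparison is really with the standard proof of that result. Your architecture matches it: the upper bound via $\abs{P(\xi)}=\abs{P(\xi)-P(\alpha)}\leq\abs{\xi-\alpha}H(P)\max(1,\abs{\xi})^{n-1}$ is fine; for the lower bound, the reduction to an irreducible factor is clean here because $H$ is exactly multiplicative (Lemma \ref{lem:height}), the decomposition $P(X)=Q(X^{p^s})$ with $p^s\leq\deg P\leq n<p^{k+1}$ (hence $s\leq k$) is the right way to handle inseparability, and the Frobenius identity $(\alpha-\alpha_j)^{p^s}=\beta-\beta_j$ gives $\abs{\xi-\alpha}^{p^s}\leq\abs{P(\xi)}/\abs{Q'(\beta)}$ for the closest root $\alpha$. (This last inequality holds \emph{unconditionally}, since $\abs{\xi-\alpha_j}\geq\max(\abs{\xi-\alpha},\abs{\alpha-\alpha_j})\geq\abs{\alpha-\alpha_j}$ by the ultrametric inequality and the choice of $\alpha$; you do not need the separation hypothesis under which you state it as an equality.)

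The genuine gap is in the key Liouville estimate. The route you describe---$\abs{\Res(Q,Q')}\geq 1$ together with upper bounds on the conjugate values $\abs{Q'(\beta_j)}$ for $j\neq 1$---double-counts the pairwise distances: each unordered pair $\{\beta_i,\beta_j\}$ with $i,j\geq 2$ occurs in two of the factors $Q'(\beta_i),Q'(\beta_j)$, and even with the Mahler-measure identity $\abs{a}\prod_j\max(1,\abs{\beta_j})=H$ this yields only $\abs{Q'(\beta)}\geq H^{-(2\deg Q-2)}$. That suffices when $k\geq 1$ (then $2\deg Q\leq 2n\leq np^k$), but when $2\leq n<p$ (so $k=s=0$ and $\deg Q=\deg P$ can equal $n$) it produces only $w_n^*(\xi)\geq w_n(\xi)-2n+1$, short of the stated $w_n(\xi)-n+1$ by $n$. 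The repair is to work with unordered pairs once each: from $\abs{a}^{2d-2}\prod_{i<j}\abs{\beta_i-\beta_j}^2=\abs{\Disc(Q)}\geq 1$ and
\begin{equation*}
\prod_{2\leq i<j\leq d}\abs{\beta_i-\beta_j}
\leq\prod_{i=2}^{d}\max(1,\abs{\beta_i})^{d-2}
\leq\Bigl(\frac{H}{\abs{a}}\Bigr)^{d-2}
\end{equation*}
one gets $\abs{Q'(\beta)}=\abs{a}\prod_{j\geq 2}\abs{\beta-\beta_j}\geq H^{-(d-2)}$ with $d=\deg Q$, which is sharp enough in all cases: it gives $\abs{\xi-\alpha}\leq H(\alpha)^{-(w+2)/p^s+d/p^s}\leq H(\alpha)^{-(w+2)/p^k+n}$. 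Finally, note that your target bound $\abs{Q'(\beta)}\geq H^{-(np^k-2)}$, divided by $p^s$ rather than $p^k$, only yields the stated exponent when $s=k$ or $w_n(\xi)\geq np^k-2$; the remaining range is harmless because there the claimed lower bound for $w_n^*(\xi)$ is negative, but this case split should be made explicit.
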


\begin{proof}
	See Proposition 5.6 in \cite{Ooto17}.
\end{proof}

The following lemma is easy consequence of Lemma \ref{lem:hikaku}.

\begin{lem}\label{lem:tnumber}
	For a $T$-number and $T^{*}$-number $\xi \in \lF_q((T^{-1}))$, we have $t^{*}(\xi )\leq t(\xi )$.
\end{lem}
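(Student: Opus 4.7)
The plan is to invoke only the easy right-hand half of Lemma \ref{lem:hikaku}, namely $w_n^{*}(\xi) \leq w_n(\xi)$, valid for every integer $n \geq 1$, and then push this pointwise inequality through the definition of the type. Concretely, I would first note that because $\xi$ is both a $T$-number and a $T^{*}$-number, $w_n(\xi)$ and $w_n^{*}(\xi)$ are finite for every $n$; but $w(\xi)=w^{*}(\xi)=+\infty$ forces $w_n(\xi)\to+\infty$ (and similarly for $w_n^{*}$) along the subsequences of $n$ on which the $\limsup$'s defining $t(\xi)$ and $t^{*}(\xi)$ are attained, so in particular $w_n(\xi)$ and $w_n^{*}(\xi)$ eventually exceed $1$ along these subsequences and the logarithms are positive.

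Next, for every $n\geq 2$ with $w_n^{*}(\xi)>1$, Lemma \ref{lem:hikaku} gives $1<w_n^{*}(\xi)\leq w_n(\xi)$, so taking logarithms and dividing by $\log n>0$ yields
\begin{equation*}
\frac{\log w_n^{*}(\xi)}{\log n}\leq \frac{\log w_n(\xi)}{\log n}.
\end{equation*}
For the remaining $n$ (where $w_n^{*}(\xi)\leq 1$), the left-hand side is non-positive, hence trivially $\leq t(\xi)$ since $t(\xi)\geq 1$ by the remark following the definition of the type. Taking $\limsup_{n\to\infty}$ therefore produces $t^{*}(\xi)\leq t(\xi)$, as desired.

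The main obstacle is essentially nonexistent: the entire content lies in the pointwise domination $w_n^{*}(\xi)\leq w_n(\xi)$ from Lemma \ref{lem:hikaku}, and the rest is a routine passage to $\limsup$ after taking logarithms. The only small care required is to separate off the values of $n$ where $w_n^{*}(\xi)\leq 1$ so that one is not tempted to apply $\log$ to a quantity that might be zero or less than one; this is handled trivially because the corresponding terms contribute at most $0$ to the $\limsup$.
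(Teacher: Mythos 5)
Your argument is correct and is exactly the one the paper intends: the paper gives no written proof beyond remarking that the lemma is an easy consequence of Lemma \ref{lem:hikaku}, and your use of the pointwise bound $w_n^{*}(\xi)\leq w_n(\xi)$ followed by logarithms and a $\limsup$ is precisely that easy consequence. Your extra care for the values of $n$ with $w_n^{*}(\xi)\leq 1$ is harmless (and in fact unnecessary, since Lemma \ref{lem:MKlower} gives $w_n^{*}(\xi)\geq (n+1)/2$).
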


The following lemma is well-known and immediately seen.

\begin{lem}\label{lem:height}
	For $P(X),Q(X)\in (\lF_q[T])[X]$, we have $H(PQ) = H(P)H(Q)$.
\end{lem}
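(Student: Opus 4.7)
The plan is to recognize this as Gauss's lemma for the non-archimedean Gauss norm. Write $P(X) = \sum_{i=0}^{m} a_i X^i$ and $Q(X) = \sum_{j=0}^{n} b_j X^j$ with $a_i, b_j \in \mathbb{F}_q[T]$, and let $PQ = \sum_{k=0}^{m+n} c_k X^k$ where $c_k = \sum_{i+j=k} a_i b_j$. The absolute value $|\cdot|$ on $\mathbb{F}_q((T^{-1}))$ is non-archimedean, which is the one ingredient that does all the work.

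First I would establish the easy inequality $H(PQ) \leq H(P)H(Q)$: for each $k$, the ultrametric inequality gives $|c_k| \leq \max_{i+j=k} |a_i||b_j| \leq H(P)H(Q)$, and taking the maximum over $k$ yields the bound.

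For the reverse inequality, I would choose $i_0$ to be the largest index with $|a_{i_0}| = H(P)$ and $j_0$ the largest index with $|b_{j_0}| = H(Q)$, and examine the coefficient $c_{i_0+j_0} = \sum_{i+j=i_0+j_0} a_i b_j$. For any pair $(i,j) \neq (i_0,j_0)$ with $i+j = i_0+j_0$, either $i > i_0$, forcing $|a_i| < H(P)$, or $j > j_0$, forcing $|b_j| < H(Q)$; in either case $|a_i b_j| < H(P)H(Q)$, while the distinguished term satisfies $|a_{i_0} b_{j_0}| = H(P)H(Q)$. By the strict form of the ultrametric inequality (maximum achieved uniquely), $|c_{i_0+j_0}| = H(P)H(Q)$, so $H(PQ) \geq H(P)H(Q)$.

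There is no real obstacle here; the only point to double-check is the strict ultrametric inequality when the maximum is achieved a single time, and the maximality convention in the choice of $i_0$ and $j_0$ that guarantees uniqueness of the dominant term. Combining both inequalities gives the claimed equality $H(PQ) = H(P)H(Q)$.
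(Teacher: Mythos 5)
Your proof is correct; the paper gives no argument at all (it states the lemma as ``well-known and immediately seen''), and your Gauss-norm argument --- upper bound from the ultrametric inequality, lower bound by isolating the coefficient $c_{i_0+j_0}$ where the maximum is attained uniquely --- is exactly the standard proof the author is implicitly invoking. The only detail left unsaid is the trivial case $P=0$ or $Q=0$, and the (true) fact that $|ab|=|a|\,|b|$ on $\lF_q[T]$, both of which are immediate.
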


We recall a Liouville inequality which is Korollar 3 in \cite{Muller93} or Proposition 3.4 in \cite{Ooto17}.

\begin{lem}\label{lem:Liovilleineq}
	Let $\alpha ,\beta \in \overline{\lF_q(T)}$ be distinct algebraic numbers of degree $m$ and $n$, respectively.
	Then we have
	\begin{equation*}
		\abs{\alpha -\beta }
		\geq H(\alpha )^{-n} H(\beta )^{-m}.
	\end{equation*}
\end{lem}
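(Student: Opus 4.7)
The plan is to prove this non-archimedean Liouville inequality by the classical resultant method, transported to the function-field setting. The key integrality fact replacing ``a nonzero integer has absolute value at least $1$'' is that every nonzero polynomial in $\lF _q[T]$ has absolute value at least $1$.

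First I would let $P, Q \in (\lF _q[T])[X]_{\min }$ be the minimal polynomials of $\alpha $ and $\beta $, of degrees $m$ and $n$ with leading coefficients $a_P, a_Q$ and conjugates $\alpha = \alpha _1, \ldots , \alpha _m$ and $\beta = \beta _1, \ldots , \beta _n$ in $\overline{\lF _q(T)}$. In the generic case $P \neq Q$, irreducibility of both polynomials gives coprimality, so
\begin{equation*}
\Res (P, Q) = a_P^n a_Q^m \prod _{i=1}^m \prod _{j=1}^n (\alpha _i - \beta _j) \in \lF _q[T]\setminus \{0\},
\end{equation*}
and thus $\abs{\Res (P, Q)} \geq 1$. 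I would then isolate the factor $\abs{\alpha - \beta }$, bounding the remaining $mn - 1$ factors from above via the non-archimedean estimate $\abs{\alpha _i - \beta _j} \leq \max (\abs{\alpha _i}, \abs{\beta _j})$ together with the standard root bound $\abs{\gamma } \leq H(R)/\abs{a_R}$ for any root $\gamma $ of $R \in (\lF _q[T])[X]$ with leading coefficient $a_R$. Combined with $\abs{a_P} \leq H(\alpha )$ and $\abs{a_Q} \leq H(\beta )$, this rearranges to the claimed inequality $\abs{\alpha - \beta } \geq H(\alpha )^{-n} H(\beta )^{-m}$.

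If instead $P = Q$, so $m = n$ and $\beta $ is a conjugate of $\alpha $, I would run the analogous argument on $\Disc (P) \in \lF _q[T]\setminus \{0\}$ when $P$ is separable. In the sub-case peculiar to positive characteristic where $P$ is inseparable, I would factor $P(X) = R(X^{p^e})$ with $R$ separable and irreducible, and then exploit the Frobenius identity $\abs{\alpha - \beta }^{p^e} = \abs{\alpha ^{p^e} - \beta ^{p^e}}$ to reduce to the separable instance applied to the conjugate pair $\alpha ^{p^e}, \beta ^{p^e}$, whose heights are controlled by those of $\alpha $ and $\beta $.

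The main obstacle is not conceptual but arithmetic: the exponent bookkeeping must come out to exactly $-n$ on $H(\alpha )$ and $-m$ on $H(\beta )$, which is precisely where the non-archimedean triangle inequality is essential — in the classical proof over $\lQ $ one incurs $2^{m+n}$-type losses that are absent here. The inseparability sub-case is the only genuine novelty compared with the characteristic-zero proof, and is the most delicate point to handle cleanly.
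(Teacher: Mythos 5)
The paper offers no proof of this lemma: it is quoted verbatim from Korollar 3 of M\"uller's paper and Proposition 3.4 of the author's earlier work, and the argument given there is exactly the resultant computation you outline, so your route matches the actual source. Your sketch is essentially correct, including the two sub-cases special to this setting: the conjugate case via the discriminant (which in fact yields the stronger bound $H(\alpha)^{-(m-1)}$) and the inseparable case via Frobenius descent --- note that $\alpha^{p^e}\neq\beta^{p^e}$ because $x\mapsto x^{p^e}$ is injective, and $H(R)=H(P)$ since $R$ and $P$ have the same coefficients, so that reduction is clean. The one step you must state more carefully is the passage from $\prod_{i,j}\abs{\alpha_i-\beta_j}$ to the height bound: applying the individual root bound $\abs{\gamma}\leq H(R)/\abs{a_R}$ to each of the $mn-1$ remaining factors produces exponents of order $mn$, not $n$ and $m$. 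What is needed is the exact ultrametric Mahler-measure identity $H(R)=\abs{a_R}\prod_i\max(1,\abs{\gamma_i})$, i.e.\ the multiplicativity of the Gauss norm (Lemma \ref{lem:height} extended to linear factors over $\overline{\lF_q(T)}$), applied once to each of $P$ and $Q$. With that substitution the bookkeeping closes exactly:
\begin{equation*}
1\leq\abs{\Res(P,Q)}=\abs{a_P}^n\abs{a_Q}^m\prod_{i,j}\abs{\alpha_i-\beta_j}\leq\abs{\alpha-\beta}\cdot\abs{a_P}^n\abs{a_Q}^m\left(\frac{H(P)}{\abs{a_P}}\right)^{n}\left(\frac{H(Q)}{\abs{a_Q}}\right)^{m}=\abs{\alpha-\beta}H(\alpha)^nH(\beta)^m,
\end{equation*}
where the middle inequality uses $\max(\abs{\alpha_i},\abs{\beta_j})\leq\max(1,\abs{\alpha_i})\max(1,\abs{\beta_j})$ and the fact that discarding the factor indexed by $(1,1)$ from the product of these majorants only decreases it. This is precisely where, as you note, the absence of archimedean $2^{m+n}$-losses makes the function-field constant come out equal to $1$.
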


As an application of the Liouville inequality, we show the following lemma.
Lemma \ref{lem:applio} means that if $\xi \in \lF_q((T^{-1}))$ has a dense (in a suitable sense) sequence of very good algebraic approximations of degree at most $d$, then we can estimate the upper bound of $w_d^{*}(\xi )$.
Some results which are relate to the lemma are known (see e.g. \cite{Adamczewski10, Amou91, Bugeaud12, Firicel13, Ooto17, Ooto172}).

\begin{lem}\label{lem:applio}
	Let $\xi $ be in $\lF_q((T^{-1}))$, $d\geq 1$ be an integer, and $\theta ,\rho ,\delta $ be positive numbers.
	Assume that there exists a sequence of distinct terms $(\alpha _j)_{j\geq 1}$, and an increasing and divergent sequence of real numbers $(\beta _j)_{j\geq 1}$ with $\alpha _j\in \overline{\lF_q(T)}$ of degree at most $d$ and $\beta _j\geq 1$ for all integers $j\geq 1$, such that
	\begin{gather*}
	d+\delta 
	\leq \liminf_{j\rightarrow \infty } \frac{-\log \abs{\xi -\alpha _j}}{\log \beta _j},\quad 
	\limsup_{j\rightarrow \infty } \frac{-\log \abs{\xi -\alpha _j}}{\log \beta _j}
	\leq d+\rho ,\\
	\limsup_{j\rightarrow \infty } \frac{\log \beta _{j+1}}{\log \beta _j}
	\leq \theta ,\quad 
	\limsup_{j\rightarrow \infty } \frac{\log H(\alpha _j)}{\log \beta _j}
	\leq 1. 
	\end{gather*}
	Then we have
	\begin{equation*}
		d+\delta -1
		\leq w_d ^{*}(\xi )
		\leq (d+\rho )\frac{d\theta }{\delta }-1.
	\end{equation*}
\end{lem}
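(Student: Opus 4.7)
My plan is to prove the two inequalities separately. The lower bound will follow directly from unpacking the hypotheses, while the upper bound is the substantive part and will rest on a contradiction argument driven by the Liouville inequality (Lemma \ref{lem:Liovilleineq}).

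For the lower bound $d + \delta - 1 \leq w_d^*(\xi)$, I would fix a small $\epsilon > 0$ and combine $|\xi - \alpha_j| \leq \beta_j^{-(d+\delta-\epsilon)}$ with $H(\alpha_j) \leq \beta_j^{1+\epsilon}$ to get $|\xi - \alpha_j| \leq H(\alpha_j)^{-(d+\delta-\epsilon)/(1+\epsilon)}$ for all sufficiently large $j$. Since the $\alpha_j$ are pairwise distinct algebraic numbers of degree at most $d$, their heights necessarily tend to infinity, and letting $\epsilon \to 0$ gives the claim.

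For the upper bound I would associate to each algebraic $\alpha$ of degree at most $d$ with sufficiently large height an auxiliary index $j = j(\alpha)$ and apply the non-Archimedean triangle inequality to the decomposition $\xi - \alpha_j = (\xi - \alpha) + (\alpha - \alpha_j)$. Fixing small $\epsilon > 0$ and setting $s_\epsilon := d/(\delta - (d+1)\epsilon)$, I would let $j$ be the least index with $\beta_j > H(\alpha)^{s_\epsilon}$; the growth hypothesis on $(\beta_j)$ then produces the sandwich $H(\alpha)^{s_\epsilon} < \beta_j \leq H(\alpha)^{s_\epsilon(\theta+\epsilon)}$. In the case $|\xi - \alpha| \geq |\alpha - \alpha_j|$ (which includes the trivial case $\alpha = \alpha_j$), the ultrametric property yields $|\xi - \alpha| \geq |\xi - \alpha_j| \geq \beta_j^{-(d+\rho+\epsilon)} \geq H(\alpha)^{-s_\epsilon(\theta+\epsilon)(d+\rho+\epsilon)}$, which matches the target exponent in the limit. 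In the opposite case the ultrametric equality $|\xi - \alpha_j| = |\alpha - \alpha_j|$ combines with Liouville's bound $|\alpha - \alpha_j| \geq H(\alpha)^{-d}H(\alpha_j)^{-d}$, the upper estimate $|\xi - \alpha_j| \leq \beta_j^{-(d+\delta-\epsilon)}$, and $H(\alpha_j) \leq \beta_j^{1+\epsilon}$ to force $\beta_j \leq H(\alpha)^{s_\epsilon}$, contradicting the defining property of $j$. Letting $\epsilon \to 0$, the exponent $s_\epsilon(\theta+\epsilon)(d+\rho+\epsilon)$ tends to $(d+\rho)d\theta/\delta$, yielding $w_d^*(\xi) + 1 \leq (d+\rho)d\theta/\delta$.

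The main obstacle is the calibration of the threshold $s_\epsilon$: it must be placed precisely at the value $d/\delta$ in the limit, so that the Liouville estimate in the bad case becomes just contradictory while the good case still delivers the sharp target exponent $(d+\rho)d\theta/\delta$. The non-Archimedean structure of $|\cdot|$ is essential here, since it promotes the triangle inequality to the genuine equality $|\xi - \alpha_j| = |\alpha - \alpha_j|$ in the problematic case, which is exactly what drives the contradiction.
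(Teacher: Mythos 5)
Your proposal is correct and follows essentially the same route as the paper: both arguments pick the index $j$ so that $\beta_j$ sits just above $H(\alpha)^{d/(\delta-\varepsilon(1+d))}$, use Lemma \ref{lem:Liovilleineq} together with $H(\alpha_j)\leq \beta_j^{1+\varepsilon}$ to show that $\alpha$ cannot be closer to $\alpha_j$ than $\xi$ is, and then transfer the lower bound via the ultrametric inequality. The only difference is organizational --- you run the Liouville step as a contradiction that eliminates the bad case, whereas the paper keeps both cases and takes the maximum of the two resulting exponents --- and the conclusions coincide.
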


\begin{proof}
	Let $\varepsilon $ be a positive number with $\varepsilon(1+d)<\delta$.
	Then, by the assumption, there exists an integer $c_0\geq 1$ such that
	\begin{equation*}
		\beta _j^{-d-\rho -\varepsilon }
		\leq \abs{\xi -\alpha _j}
		\leq \beta _j^{-d-\delta +\varepsilon },\quad
		\beta _j
		\leq
		\beta _{j+1}\leq
		\beta _j^{\theta +\varepsilon },\quad 
		H(\alpha _j)
		\leq \beta _j^{1+\varepsilon }
	\end{equation*} 
	for all integers $j\geq c_0$.
	Since $\varepsilon $ is arbitrary small, we obtain $d+\delta -1\leq w_d ^{*}(\xi )$.
	
	Let $\alpha \in \overline{\lF_q(T)}$ be an algebraic number of degree at most $d$ with sufficiently large height.
	We define an integer $j_0\geq c_0$ by
	\begin{equation*}
		\beta _{j_0}
		\leq H(\alpha )^{\frac{d(\theta +\varepsilon )}{\delta -\varepsilon (1+d)}}
		< \beta _{j_0+1}.
	\end{equation*}
	
	We first consider the case of $\alpha \neq \alpha _{j_0}$.
	Since
	\begin{equation*}
		H(\alpha )^d
		< \beta_{j_0+1}^{\frac{\delta -\varepsilon (1+d)}{\theta +\varepsilon }} 
		\leq \beta _{j_0}^{\delta -\varepsilon (1+d)},
	\end{equation*}
	we get
	\begin{equation*}
		\abs{\alpha -\alpha _{j_0}}
		\geq H(\alpha )^{-d}H(\alpha _{j_0})^{-d}
		> \beta _{j_0}^{-d-\delta +\varepsilon }
		\geq \abs{\xi -\alpha_{j_0}}
	\end{equation*}
	by Lemma \ref{lem:Liovilleineq}.
	Therefore, we obtain
	\begin{align*}
		\abs{\xi -\alpha }
		& = \abs{\alpha -\alpha_{j_0}}
		\geq H(\alpha )^{-d}H(\alpha _{j_0})^{-d}\\
		& \geq H(\alpha )^{-d}\beta_{j_0}^{-d(1+\varepsilon )}
		\geq H(\alpha )^{-d-\frac{d^2(\theta +\varepsilon )(1+\varepsilon )}{\delta -\varepsilon (1+d)}}.
	\end{align*}
	
	We next consider the case of $\alpha =\alpha_{j_0}$.
	By the assumption, we have
	\begin{equation*}
		\abs{\xi -\alpha }
		\geq \beta _{j_0}^{-d-\rho -\varepsilon }
		\geq H(\alpha )^{-(d+\rho +\varepsilon)\frac{d(\theta +\varepsilon)}{\delta -\varepsilon (1+d)}}.
	\end{equation*}
	Since $\varepsilon $ is arbitrary small, we deduce that
	\begin{equation*}
		w_d^{*}(\xi )
		\leq \max \left( d+\frac{d^2\theta}{\delta}-1, (d+\rho)\frac{d\theta}{\delta}-1 \right) 
		= (d+\rho )\frac{d\theta }{\delta }-1.
	\end{equation*}
\end{proof}

\section{Proof of Main Results}\label{sec:proof}

\begin{proof}[Proof of Theorem \ref{thm:main}]
	For simplicity of notation, we put $\xi :=\xi (r,{\bf m})$ and $\alpha_j:=\alpha_j(r,{\bf m})$.
	For an integer $j\geq 0$, we define sequences $(a(j,n))_{n\geq 1}$ and $(b(j,n))_{n\geq 1}$ in $\lF _q$ by
	\begin{equation*}
	\sum_{n=0}^{j}\alpha _n 
	= \sum_{n=1}^{\infty }a(j,n)T^{-r^n},\quad 
	\sum_{n=j+1}^{\infty }\alpha _n 
	= \sum_{n=1}^{\infty }b(j,n)T^{-r_{j+1}^n}.
	\end{equation*} 
	For integers $j\geq i\geq 0$, we put $M(i,j):=m_im_{i+1}\cdots m_j$.
	For convenience, we put $M(i,j):=1$ for integers $i>j\geq 0$.
	Then it is easy to check that
	\begin{equation}\label{eq:bjn}
	b(j,n)
	=\ell \bmod p,
	\end{equation}
	where $\ell \geq 1$ is an integer with $M(j+2,j+\ell )\mid n$ and $M(j+2,j+\ell +1)\nmid n$.
	For integers $j\geq 0$ and $k\geq 1$, we define algebraic Laurent series $\alpha (j,k)$ by
	\begin{equation*}
	\alpha (j,k) 
	= \sum_{n=0}^{j}\alpha _n+\sum_{n=1}^{k}b(j,n)T^{-r_{j+1}^n}.
	\end{equation*}
	
	In what follows, we estimate upper bounds of degree and height of $\alpha (j,k)$.
	We observe that
	\begin{equation}\label{eq:alpha}
	\alpha (j,k)^{r_j} 
	= \sum_{n=0}^{j}\alpha _n ^{r_j}+\sum_{n=1}^{k}b(j,n)T^{-r_{j+1}^nr_j}.
	\end{equation}
	By the equation \eqref{eq:equation}, for an integer $0\leq n\leq j$, we have
	\begin{align*}
	\alpha _n^{r_j}
	& = (\alpha _n-T^{-r_n})^{r_n^{M(n+1,j)-1}}
	= \alpha_n^{r_n^{M(n+1,j)-1}}-T^{-r_n^{M(n+1,j)}}\\
	& = \cdots 
	= \alpha _n -\sum_{i=1}^{M(n+1,j)}T^{-r_n^i}.
	\end{align*}
	By the definition of the sequence $(a(j,n))_{n\geq 1}$, we obtain
	\begin{equation*}
	\sum_{n=0}^{j}\alpha _n ^{r_j} 
	= \sum_{n=0}^{j}\alpha _n - \sum_{n=1}^{M(1,j)}a(j,n)T^{-r^n}.
	\end{equation*}
	Therefore, by \eqref{eq:alpha}, $\alpha (j,k)$ is a root of the polynomial
	\begin{equation*}
	X^{r_j}-X+\sum_{n=1}^{M(1,j)}a(j,n)T^{-r^n}+\sum_{n=1}^{k}b(j,n)T^{-r_{j+1}^n}-\sum_{n=1}^{k}b(j,n)T^{-r_{j+1}^nr_j}.
	\end{equation*}
	Hence, it follows from Lemma \ref{lem:height} that $\deg \alpha (j,k)\leq r_j$ and $H(\alpha (j,k))\leq q^{r_{j+1}^kr_j}$.
	
	For an integer $j\geq 0$, we denote by $K_j$ the set of all integers $k\geq 1$ with $M(j+2,j+p)\mid (k+1)$ and $M(j+2,j+p+1)\nmid (k+1)$.
	Note that $K_j$ is the infinite set.
	We put $K_j =: \{ k_1<k_2<\ldots \}$.
	We observe that for all integers $i\geq 1$,
	\begin{equation}\label{eq:ki}
	k_{i+1}-k_i\leq 2M(j+2,j+p).
	\end{equation}
	
	We show that for all integers $i\geq 1$,
	\begin{equation}\label{eq:app}
	\abs{\xi -\alpha (j,k_i)}
	= q^{-r_{j+1}^{k_i+2}}.
	\end{equation}
	We observe that
	\begin{equation*}
	\abs{\xi -\alpha (j,k_i)} 
	= \abs{\sum_{n=k_i+1}^{\infty }b(j,n)T^{-r_{j+1}^n}}.
	\end{equation*}
	By \eqref{eq:bjn}, we have $b(j,k_i+1)=0$.
	Then we deduce that $M(j+2,j+2)\mid (k_i+1)$, which implies $M(j+2,j+2)\nmid (k_i+2)$.
	Therefore, we get $b(j,k_i+2)=1$.
	Hence, we obtain \eqref{eq:app}.
	
	For integers $j\geq 0$ and $k\geq 1$, we put $\beta (j,k):=q^{r_{j+1}^kr_j}$.
	Then, by \eqref{eq:ki} and \eqref{eq:app}, we have
	\begin{equation*}
	\frac{-\log \abs{\xi -\alpha (j,k_i)}}{\log \beta (j,k_i)}
	= \frac{r_{j+1}^2}{r_j},\quad 
	\frac{\log \beta (j,k_{i+1})}{\log \beta (j,k_i)}
	\leq r_{j+p}^2
	\end{equation*}
	for all integers $i\geq 1$.
	It is trivial that $r_{j+1}^2/r_j>r_j$ for all integers $j\geq 0$.
	Applying Lemma \ref{lem:applio} with $d=r_j, \delta = \rho = r_{j+1}^2/r_j-r_j,$ and $\theta = r_{j+p}^2$, we obtain
	\begin{equation}\label{eq:wn}
	\frac{r_{j+1}^2}{r_j}-1
	\leq w_{r_j}^{*}(\xi )
	\leq \frac{r_jr_{j+1}^2r_{j+p}^2}{r_{j+1}^2-r_j^2}-1
	\end{equation}
	for all integers $j\geq 0$.
	Hence, it follows that $w_n^{*}(\xi )<+\infty $ for all integers $n\geq 1$.
	We also have
	\begin{equation*}
	w^{*}(\xi )
	\geq \limsup_{j\rightarrow \infty } \frac{w_{r_j}^{*}(\xi )}{r_j}
	\geq \limsup_{j\rightarrow \infty } \left( \frac{r_{j+1}^2}{r_j^2}-\frac{1}{r_j}\right)
	= +\infty . 
	\end{equation*}
	Thus, the Laurent series $\xi $ is $T^{*}$-number.
	Therefore, by Section \ref{sec:mainresult}, we deduce that $\xi $ is $T$-number.
\end{proof}

\begin{proof}[Proof of Theorem \ref{thm:main2}]
	Assume the notation of the proof of Theorem \ref{thm:main}.	
	By \eqref{eq:wn} and Lemma \ref{lem:tnumber}, we obtain
	\begin{equation*}
	t(\xi )
	\geq t^{*}(\xi )
	\geq \limsup_{j\rightarrow \infty } \frac{\log w_{r_j}^{*}(\xi )}{\log r_j}
	\geq \limsup_{j\rightarrow \infty }(2m_j-1).
	\end{equation*}
	Since $r_{j+1}^2/(r_{j+1}^2-r_j^2)\leq 2$, we deduce that for all integers $j\geq 1$ and $r_{j-1}+1\leq n\leq r_j$,
	\begin{equation*}
	\frac{\log w_n^{*}(\xi )}{\log n}
	\leq \frac{\log w_{r_j}^{*}(\xi )}{\log r_{j-1}}
	\leq m_j+2m_jm_{j+1}\cdots m_{j+p}+\frac{\log 2}{\log r_{j-1}}.
	\end{equation*}
	Therefore, we get \eqref{eq:main1}.
	
	By \eqref{eq:wn} and Lemma \ref{lem:hikaku}, we obtain
	\begin{equation*}
	w_{r_j}(\xi )
	\leq \frac{(r_jr_{j+1}r_{j+p})^2}{r_{j+1}^2-r_j^2}+r_j^2-2
	\leq 3r_j^2r_{j+p}^2
	\end{equation*}
	for all integers $j\geq 0$.
	Therefore, it follows that, for all integers $j\geq 1$ and $r_{j-1}+1\leq n\leq r_j$,
	\begin{equation*}
	\frac{\log w_n(\xi )}{\log n}
	\leq 2m_j+2m_jm_{j+1}\cdots m_{j+p}+\frac{\log 3}{\log r_{j-1}},
	\end{equation*}
	which implies \eqref{eq:main2}.
	
	Assume that $m_j\geq 3$ for all sufficiently large $j\geq 1$.
	In the same way to the proof of \eqref{eq:app}, it follows that for all integers $j\geq 0$ and $k\geq 1$,
	\begin{equation*}
	q^{-r_{j+1}^{k+2}}
	\leq \abs{\xi -\alpha (j,k)}
	\leq q^{-r_{j+1}^{k+1}}.
	\end{equation*}
	Therefore, we have
	\begin{equation*}
		\frac{r_{j+1}}{r_j}
		\leq \frac{-\log \abs{\xi -\alpha (j,k)}}{\log \beta (j,k)}
		\leq \frac{r_{j+1}^2}{r_j},\quad 
		\frac{\log \beta (j,k+1)}{\log \beta (j,k)}=r_{j+1}
	\end{equation*}
	for all $j\geq 0$ and $k\geq 1$.
	By the assumption, we obtain $r_{j+1}/r_j>r_j$ for all sufficiently large $j\geq 1$.
	Applying Lemma \ref{lem:applio}, we have
	\begin{equation}
	w_{r_j}^{*}(\xi )
	\leq \frac{r_jr_{j+1}^3}{r_{j+1}-r_j^2}-1
	\end{equation}
	for all sufficiently large $j\geq 1$.
	In the same way to the proof of \eqref{eq:main1} and \eqref{eq:main2}, we derive \eqref{eq:main3} and \eqref{eq:main4}.
\end{proof}

\begin{proof}[Proof of Theorem \ref{thm:intro}]
	Let $r$ be a power of $p$ and ${\bf m}=(m_j)_{j\geq 0}$ be an integer sequence with $m_0=1, m_j\geq 2$ for all integers $j\geq 1$.
	Let ${\bf a}=(a_j)_{j\geq 0}$ be an integer sequence with $a_j\in \{ 0,1\}$ for all integers $j\geq 0$.
	Assume that $a_j=1$ for infinitely many $j\geq 0$.
	We put
	\begin{equation*}
	\xi _{{\bf a}}(r,{\bf m}):=\sum_{j=0}^{\infty }a_j\alpha_j(r,{\bf m}).
	\end{equation*}
	Then there exist $r'$ and ${\bf m}'=(m_j')_{j\geq 0}$ such that $r'$ is a power of $p$, ${\bf m}'=(m_j')_{j\geq 0}$ is an integer sequence with $m_0'=1, m_j'\geq 2$ for all integers $j\geq 1$, and $\xi _{{\bf a}}(r,{\bf m})=\xi (r',{\bf m}')$.
	Therefore, the Laurent series $\xi _{{\bf a}}(r,{\bf m})$ is $T$-number.
	Let ${\bf a}'=(a_j')_{j\geq 0}$ be an integer sequence with ${\bf a} \neq {\bf a}', a_j' \in \{ 0,1\}$ for all integers $j\geq 0$, and $a_i'=1$ for infinitely many $i$.
	We define $j_0\geq 0$ by $a_j=a_j'$ for all $0\leq j<j_0$ and $a_{j_0}\neq a_{j_0}'$.
	Then we have
	\begin{align*}
		\abs{\xi_{{\bf a}}(r,{\bf m})-\xi_{{\bf a}'}(r,{\bf m})}
		& = \abs{(a_{j_0}-a_{j_0}')\alpha_{j_0}(r,{\bf m}) + \sum_{j=j_0+1}^{\infty} (a_j-a_j')\alpha_j(r,{\bf m})}\\
		& = \abs{\alpha_{j_0}(r,{\bf m})} = q^{-r_{j_0}} \neq 0,
	\end{align*}
	which implies $\xi_{{\bf a}}(r,{\bf m})\neq \xi_{{\bf a}'}(r,{\bf m})$.
	Since there are uncountably many choices of such sequences ${\bf a}$, the proof is complete.
\end{proof}

\appendix
\section{Basic properties of Mahler's classification}\label{sec:app}

\begin{lem}\label{lem:MKlower}
	Let $n\geq 1$ be an integer and $\xi \in \lF _q((T^{-1}))$ be not algebraic of degree at most $n$.
	Then we have $w_n (\xi ) \geq n$ and $w_n ^{*}(\xi ) \geq (n+1)/2$.
\end{lem}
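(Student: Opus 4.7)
The proof splits into two parts, one for each inequality.

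For $w_n(\xi) \geq n$, my plan is a Dirichlet-type pigeonhole argument over $\lF_q$. Fix a large integer $N$ and consider the finite $\lF_q$-vector space $V_N$ of polynomials $P(X) = \sum_{i=0}^n p_i(T) X^i \in (\lF_q[T])[X]$ with each $p_i(T)$ of $T$-degree at most $N$; its $\lF_q$-dimension equals $(n+1)(N+1)$. The Laurent expansion $P(\xi) = \sum_k c_k T^{-k}$ has each $c_k$ an $\lF_q$-linear form in the coefficients of the $p_i$, and the $c_k$ with $-k > N + \lceil \log_q \max(1,|\xi|^n)\rceil$ are automatically zero. Requiring $c_k = 0$ for all $k$ with $-k$ in a suitable range of length $M+N+1$ imposes that many $\lF_q$-linear conditions on $V_N$. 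Choosing $M = nN+n-1$ makes the number of conditions strictly less than $\dim V_N$, so by linear algebra a nonzero $P \in V_N$ satisfies $|P(\xi)| \leq q^{-M-1} \leq H(P)^{-n}$. Non-vanishing $P(\xi) \neq 0$ follows because $\xi$ is not algebraic of degree at most $n$, and letting $N$ vary gives infinitely many such $P$.

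For $w_n^*(\xi) \geq (n+1)/2$, I would extract good algebraic approximations from the polynomials above. Given $P$ of degree at most $n$ with $|P(\xi)| \leq H(P)^{-n}$, Lemma \ref{lem:height} ensures that some irreducible primitive factor $Q \in (\lF_q[T])[X]_{\min}$ of degree $d \leq n$ inherits $|Q(\xi)| \leq H(Q)^{-n}$. Let $\alpha$ be a root of $Q$ closest to $\xi$; the non-Archimedean inequality yields $|\xi - \alpha_i| \geq |\xi - \alpha|$ for every other root $\alpha_i$ of $Q$, so the factorization $|Q(\xi)| = |a_d| \prod_i |\xi - \alpha_i|$ gives $|\xi - \alpha|^d \leq |Q(\xi)|/|a_d| \leq H(Q)^{-n}$. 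Since $Q$ is the minimal polynomial of $\alpha$, $H(\alpha) = H(Q)$, so $|\xi - \alpha| \leq H(\alpha)^{-n/d}$, which is strong when $d$ is much smaller than $n$.

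The main obstacle is the case when $d$ is close to $n$, where the direct estimate only yields $|\xi - \alpha| \leq H(\alpha)^{-1}$ and does not reach the claimed exponent $(n+3)/2$. To upgrade the bound, I would invoke a Wirsing-style transference: the infinitely many $P$ from the first part produce an infinite family of such $\alpha$, and for two distinct approximations $\alpha, \alpha'$ of degree at most $n$ with $H(\alpha) \leq H(\alpha')$ the Liouville inequality (Lemma \ref{lem:Liovilleineq}) forces $|\alpha - \alpha'| \geq H(\alpha)^{-n} H(\alpha')^{-n}$, while the ultrametric inequality forces $|\alpha - \alpha'| \leq \max(|\xi-\alpha|, |\xi-\alpha'|)$. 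Tracking how the heights of consecutive approximations must grow in order to be compatible with both estimates yields a geometric progression whose exponents balance at $(n+3)/2$, establishing $w_n^*(\xi) \geq (n+1)/2$.
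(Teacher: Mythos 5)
The paper does not actually prove this lemma from scratch: it cites Mahler's analogue of Minkowski's theorem for the inequality $w_n(\xi)\ge n$ and Satz~1 of \cite{Guntermann96} for $w_n^*(\xi)\ge (n+1)/2$. Your pigeonhole argument for the first inequality is correct and self-contained: the dimension count $(n+1)(N+1)$ against $M+N+1$ vanishing conditions with $M=nN+n-1$ does produce a nonzero $P$ with $0<\abs{P(\xi)}\le H(P)^{-n}$ (nonvanishing because $\xi$ is not algebraic of degree at most $n$ and a nonzero element of $\lF_q[T]$ has absolute value at least $1$), and the fixed positive value of $\abs{P(\xi)}$ forces the polynomial to change infinitely often as $N\to\infty$. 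So for the first half you have in effect reproved the cited reference; that is a legitimate, slightly more elementary route than the paper's bare citation.

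The second half has a genuine gap. The root-extraction step is fine, and you correctly identify that it only yields $\abs{\xi-\alpha}\le H(\alpha)^{-n/d}$, which is far too weak when $d$ is close to $n$. But the proposed repair --- combining the Liouville bound $\abs{\alpha-\alpha'}\ge H(\alpha)^{-n}H(\alpha')^{-n}$ with the ultrametric bound $\abs{\alpha-\alpha'}\le\max(\abs{\xi-\alpha},\abs{\xi-\alpha'})$ and ``tracking how the heights grow'' --- cannot produce the exponent $(n+3)/2$. If each approximation satisfies $\abs{\xi-\alpha}\le H(\alpha)^{-\mu}$, those two inequalities only yield a gap condition $H(\alpha')\ge H(\alpha)^{(\mu-n)/n}$ on consecutive heights; they constrain how often good approximations occur but do nothing to upgrade $\mu$ itself. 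Wirsing's actual argument (and its function-field analogue, which is exactly the Satz~1 of \cite{Guntermann96} that the paper cites here) requires an additional mechanism: a case distinction on the size of $\abs{P'(\alpha)}$, equivalently on the distance from $\xi$ to the second-closest root of $P$, using $\abs{\xi-\alpha}\le\abs{P(\xi)}/\abs{P'(\alpha)}$ in the favourable case and exploiting the simultaneous proximity of two roots in the unfavourable one. Without that input the claimed ``balance at $(n+3)/2$'' is asserted rather than derived. Either carry out this Wirsing-type case analysis in the non-archimedean setting or simply cite Guntermann, as the paper does.
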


\begin{proof}
	The former estimate follows from an analogue of Minkowski's theorem for Laurent series over a finite field \cite{Mahler41} and the later estimates are Satz.1 of \cite{Guntermann96}.
\end{proof}

The following lemma is Theorem 5.2 in \cite{Ooto17}.

\begin{lem}\label{lem:alg}
	Let $n\geq 1$ be an integer and $\xi \in \lF _q((T^{-1}))$ be algebraic of degree $d$.
	Then we have $w_n (\xi ), w_n ^{*}(\xi ) \leq d-1$.
\end{lem}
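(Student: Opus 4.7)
The proof bounds both quantities by $d-1$ independently, exploiting the constraint that $\xi$ is a root of its own degree-$d$ minimal polynomial $Q(X)\in(\lF_q[T])[X]_{\min}$; write $b$ for the leading coefficient of $Q$ and $\xi=\xi_1,\dots,\xi_d$ for the conjugates of $\xi$ over $\lF_q(T)$. The key point is that the bound $d-1$ has nothing to do with $n$, so both statements reduce to a ``finitely many lattice points in a bounded region'' argument.

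For the bound $w_n^{*}(\xi)\leq d-1$, my plan is a direct application of the Liouville inequality from Lemma~\ref{lem:Liovilleineq}. For any $\alpha\in\overline{\lF_q(T)}$ with $\alpha\neq\xi$ and $\deg\alpha\leq n$, that lemma gives
\[
|\xi-\alpha|\geq H(\xi)^{-\deg\alpha}H(\alpha)^{-d}\geq H(\xi)^{-n}H(\alpha)^{-d}.
\]
Combined with a hypothetical inequality $|\xi-\alpha|\leq H(\alpha)^{-w-1}$, this yields $H(\alpha)^{w+1-d}\leq H(\xi)^{n}$. As soon as $w>d-1$, the height $H(\alpha)$ is forced to be bounded in terms of $\xi$ and $n$ alone, and since only finitely many algebraic numbers of bounded degree and bounded height exist, the definition of $w_n^{*}$ gives $w_n^{*}(\xi)\leq d-1$.

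For the bound $w_n(\xi)\leq d-1$, I would use the resultant. Given any $P(X)\in(\lF_q[T])[X]$ of degree at most $n$ with $P(\xi)\neq 0$, irreducibility of $Q$ together with $Q\nmid P$ forces $\gcd(P,Q)=1$ in $\lF_q(T)[X]$, so $\Res(P,Q)$ is a nonzero element of $\lF_q[T]$ and in particular $|\Res(P,Q)|\geq 1$. Expanding via the standard product formula
\[
|\Res(P,Q)|=|b|^{\deg P}\prod_{j=1}^{d}|P(\xi_j)|,
\]
and using the non-archimedean estimate $|P(\xi_j)|\leq H(P)\max(1,|\xi_j|)^{n}$ at each conjugate, I can isolate
\[
|P(\xi)|\geq c_{\xi}\,H(P)^{-(d-1)},
\]
where $c_{\xi}>0$ depends only on $\xi$ and $n$. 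The same finiteness argument as before then rules out $|P(\xi)|\leq H(P)^{-w}$ holding for infinitely many such $P$ whenever $w>d-1$, giving $w_n(\xi)\leq d-1$.

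The only technical points are recalling the product formula for the resultant and correctly applying the non-archimedean triangle inequality at each $\xi_j$; both are standard. I expect the resultant half to be the more delicate of the two, but nothing beyond Lemma~\ref{lem:Liovilleineq} and elementary properties of $\Res(P,Q)$ is required, so the proof should be short and largely mechanical.
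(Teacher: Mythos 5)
Your proof is correct, but note that the paper itself does not prove this lemma: it is quoted as Theorem 5.2 of the reference \cite{Ooto17}, so there is no in-paper argument to compare against and you are supplying the standard proof that the citation hides. Both halves of your argument go through. For $w_n^{*}$, the condition $0<\abs{\xi-\alpha}$ in the definition forces $\alpha\neq\xi$, so Lemma \ref{lem:Liovilleineq} applies and $H(\alpha)^{w+1-d}\leq H(\xi)^{n}$ with $w>d-1$ bounds $H(\alpha)$; since there are only finitely many elements of $\overline{\lF_q(T)}$ of degree at most $n$ whose minimal polynomial has bounded height (finitely many coefficients in $\lF_q[T]$ of bounded degree in $T$), the supremum defining $w_n^{*}(\xi)$ is at most $d-1$. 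For $w_n$, the identity $\abs{\Res(P,Q)}=\abs{b}^{\deg P}\prod_{j=1}^{d}\abs{P(\xi_j)}$ holds as a polynomial identity in the roots counted with multiplicity, so it is unaffected by possible inseparability of $Q$ in characteristic $p$; combined with $\abs{\Res(P,Q)}\geq 1$ (a nonzero element of $\lF_q[T]$), $\abs{b}\geq 1$ (the leading coefficient of a polynomial in $(\lF_q[T])[X]_{\min}$ is monic in $T$), and the ultrametric bound $\abs{P(\xi_j)}\leq H(P)\max(1,\abs{\xi_j})^{n}$ at the $d-1$ conjugates other than $\xi$, you get $\abs{P(\xi)}\geq c_{\xi}H(P)^{-(d-1)}$ with $c_{\xi}>0$ depending only on $\xi$ and $n$, and the same finiteness argument concludes. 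The only cosmetic caveat is that the constant $c_{\xi}$ also depends on $n$, which is harmless since $n$ is fixed throughout; otherwise the argument is complete and uses nothing beyond Lemma \ref{lem:Liovilleineq} and elementary resultant facts, exactly as one would expect the cited theorem to be proved.
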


From Lemmas \ref{lem:MKlower} and \ref{lem:alg}, we see the following proposition.

\begin{prop}\label{prop:A-number}
	Let $\xi $ be in $\lF _q((T^{-1}))$.
	Then the following conditions are equivalent:
	\begin{enumerate}
		\item $\xi $ is an $A$-number, 
		\item $\xi $ is an $A^{*}$-number, 
		\item $\xi $ is an algebraic Laurent series.
	\end{enumerate}
\end{prop}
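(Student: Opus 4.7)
The plan is to establish the three implications (iii) $\Rightarrow$ (i), (iii) $\Rightarrow$ (ii), and the contrapositives of (i) $\Rightarrow$ (iii) and (ii) $\Rightarrow$ (iii), each as a one-line consequence of the two lemmas invoked just above the statement.

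First I would handle the direction (iii) $\Rightarrow$ (i) and (iii) $\Rightarrow$ (ii) simultaneously. Suppose $\xi$ is algebraic of some degree $d$. Lemma \ref{lem:alg} gives the uniform bound $w_n(\xi), w_n^{*}(\xi) \leq d-1$ for every integer $n \geq 1$. Dividing by $n$ and letting $n \to \infty$ yields $w(\xi) = 0$ and $w^{*}(\xi) = 0$, so $\xi$ is both an $A$-number and an $A^{*}$-number.

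For the reverse directions I would argue by contrapositive: assume $\xi$ is transcendental, so in particular $\xi$ is not algebraic of degree at most $n$ for any $n \geq 1$. Then Lemma \ref{lem:MKlower} applies for every $n$, giving $w_n(\xi) \geq n$ and $w_n^{*}(\xi) \geq (n+1)/2$. Consequently $w_n(\xi)/n \geq 1$ and $w_n^{*}(\xi)/n \geq (n+1)/(2n)$, hence $w(\xi) \geq 1 > 0$ and $w^{*}(\xi) \geq 1/2 > 0$. Thus a transcendental $\xi$ is neither an $A$-number nor an $A^{*}$-number, completing both (i) $\Rightarrow$ (iii) and (ii) $\Rightarrow$ (iii).

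There is essentially no obstacle here; the whole content is packaged in Lemmas \ref{lem:MKlower} and \ref{lem:alg}, and the proof is just the observation that a uniform bound on $w_n,w_n^{*}$ forces $w=w^{*}=0$, while the Minkowski-type lower bounds force $w,w^{*}$ to be bounded away from zero in the transcendental case.
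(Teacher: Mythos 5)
Your proof is correct and follows exactly the route the paper intends: the paper gives no written proof beyond the remark that the proposition follows from Lemmas \ref{lem:MKlower} and \ref{lem:alg}, and your argument is precisely the straightforward unpacking of that remark (the uniform bound $w_n,w_n^{*}\leq d-1$ forces $w=w^{*}=0$ in the algebraic case, while the lower bounds $w_n\geq n$ and $w_n^{*}\geq (n+1)/2$ force $w\geq 1$ and $w^{*}\geq 1/2$ in the transcendental case).
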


Let $\xi $ be in $\lF_q((T^{-1}))$ and $n,H\geq 1$ be integers.
We put
\begin{equation*}
	w_n(\xi ,H) := \min \{ \abs{P(\xi )}\mid P(X)\in (\lF_q[T])[X], H(P)\leq H, \deg _X P\leq n, P(\xi )\neq 0 \} .
\end{equation*}
It is easy to check that
\begin{equation*}
	w_n(\xi ) = \limsup_{H\rightarrow \infty }\frac{-\log w_n(\xi ,H)}{\log H}.
\end{equation*}

\begin{prop}\label{prop:dependent}
	Let $\xi ,\eta \in \lF_q ((T^{-1}))$ be transcendental Laurent series.
	If $\xi $ and $\eta $ are algebraically dependent over $\lF_q(T)$, then $\xi $ and $\eta $ are in the same Mahler's class.
\end{prop}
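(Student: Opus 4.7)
The plan is to adapt the classical real-number argument (see, e.g., Bugeaud \cite{Bugeaud04}, Chapter 3) to the positive-characteristic setting, using the resultant to transport good polynomial approximations of $\eta$ into good polynomial approximations of $\xi$ and vice versa. Since $\xi$ and $\eta$ are transcendental, the $A$-case is excluded \emph{a priori}; the goal is to show that $w(\xi)$ and $w(\eta)$ are simultaneously zero, finite and positive, or infinite, and that the sequence $\bigl(w_n(\xi)\bigr)_{n\geq 1}$ is everywhere finite iff $\bigl(w_n(\eta)\bigr)_{n\geq 1}$ is.

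First I would fix an irreducible $F(X,Y) \in \lF_q[T][X,Y]$ with $F(\xi,\eta)=0$ and positive $X$- and $Y$-degrees $a$ and $b$. Given $P(Y) \in \lF_q[T][Y]$ of $Y$-degree at most $n$ with $P(\eta) \neq 0$ and $|P(\eta)|$ small, form
\[
Q(X) := \Res_Y\bigl(F(X,Y),P(Y)\bigr) \in \lF_q[T][X].
\]
Inspection of the Sylvester matrix together with the multiplicativity of the height (Lemma \ref{lem:height}) and the ultrametric inequality yields $\deg_X Q \leq na$ and $H(Q) \leq H(F)^n H(P)^b$. Factoring $F(\xi,Y) = c(\xi)\prod_{i=1}^{b}(Y-\eta_i)$ over $\overline{\lF_q((T^{-1}))}$ with $\eta_1 = \eta$, the product formula gives $Q(\xi) = c(\xi)^n \prod_{i=1}^{b} P(\eta_i)$, and the bound $|P(\eta_i)| \leq H(P)\max(1,|\eta_i|)^n$ then forces
\[
|Q(\xi)| \leq C^n \, H(P)^{b-1} \, |P(\eta)|,
\]
for some constant $C = C(F,\xi,\eta)$.

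Combining these estimates, the hypothesis $|P(\eta)| \leq H(P)^{-w}$ with $H(P) \to \infty$ and $n$ fixed translates into $|Q(\xi)| \leq H(Q)^{-(w-b+1)/b + o(1)}$, hence $w_{na}(\xi) \geq (w_n(\eta)-b+1)/b$. In particular $w_n(\eta) = +\infty$ forces $w_{na}(\xi) = +\infty$, and passing to the limsup in $n$ yields $w(\xi) \geq w(\eta)/(ab)$. The same argument with the roles of $\xi$ and $\eta$ interchanged (using the same $F$ but now viewed as a polynomial in $Y$ over $\lF_q[T][X]$) provides the reverse inequality, so $w(\xi)$ and $w(\eta)$ vanish, are finite and positive, or are infinite simultaneously, and the finiteness patterns of $\bigl(w_n(\xi)\bigr)_n$ and $\bigl(w_n(\eta)\bigr)_n$ coincide. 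Together with the transcendence hypothesis, this places $\xi$ and $\eta$ in the same Mahler class.

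The main obstacle I expect is verifying that $Q(\xi) \neq 0$: \emph{a priori} one of the factors $P(\eta_i)$ with $i\geq 2$ could vanish even when $P(\eta) \neq 0$. I would handle this in the standard way by replacing $P$ by $P/\gcd\bigl(F(\xi,Y),P(Y)\bigr)$, computed inside $\lF_q((T^{-1}))[Y]$ and then cleared of denominators; this removes at most $b-1$ extra $Y$-factors and hence perturbs $\deg P$ and $H(P)$ only by quantities controlled by $F$, while leaving $|P(\eta)|$ essentially unchanged. After this reduction every $P(\eta_i)$ with $i \geq 2$ is nonzero, $c(\xi)\neq 0$ because $\xi$ is transcendental, and the resultant estimates go through cleanly.
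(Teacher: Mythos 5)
Your overall strategy is the paper's: eliminate one variable with a resultant built from the approximating polynomial $P$ and a fixed irreducible $F$ with $F(\xi,\eta)=0$, control its degree and height through the Sylvester determinant and Lemma \ref{lem:height}, and convert the resulting inequality $w_n(\eta)\leq b-1+b\,w_{na}(\xi)$ (together with its mirror image) into the statement that $w$ and the finiteness pattern of the $w_n$ are preserved up to the bounded factors $a,b$. The degree and height bounds, the product formula for $Q(\xi)$, and the exponent $(w-b+1)/b$ are all correct and parallel the paper's $w_n(\xi)\leq M-1+Mw_{nN}(\eta)$ and $w(\xi)\leq MNw(\eta)$; the only structural difference is that you bound $\abs{Q(\xi)}$ by factoring over the roots of $F(\xi,\cdot)$, whereas the paper uses the Bezout identity $R=Pg+Fh$ and evaluates at $(\xi,\eta)$.

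The one step that does not work as written is your repair of the non-vanishing of $Q(\xi)$. The gcd of $F(\xi,Y)$ and $P(Y)$ lives in $\lF_q((T^{-1}))[Y]$ --- its coefficients involve $\xi$ --- so the quotient $P/\gcd$ has Laurent-series coefficients and cannot be ``cleared of denominators'' back into $\lF_q[T][Y]$; the modified polynomial would no longer be admissible in the definition of $w_n(\eta)$, nor would its resultant with $F$ land in $\lF_q[T][X]$. Fortunately no repair is needed: if $P(\eta_i)=0$ for some root $\eta_i$ of $F(\xi,\cdot)$, then $\eta_i\in\overline{\lF_q(T)}$, and since $\xi$ is transcendental over the algebraic extension $\lF_q(T)(\eta_i)$ the relation $F(\xi,\eta_i)=0$ forces $F(X,\eta_i)\equiv 0$ identically in $X$; then the minimal polynomial of $\eta_i$ divides every coefficient of $F$ viewed as a polynomial in $X$, so $F$ has a nonconstant factor depending on $Y$ alone, contradicting its irreducibility (it has positive $X$-degree). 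Hence $Q(\xi)\neq 0$ automatically. The paper sidesteps this issue entirely: from $R=Pg+Fh$ one gets $R(\eta)=P(\xi)g(\xi,\eta)$, and $R(\eta)\neq 0$ is immediate because $R$ is a nonzero polynomial over $\lF_q[T]$ and $\eta$ is transcendental. With the non-vanishing argued as above (or via the Bezout route), your proof is complete.
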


\begin{proof}
	For an integer $H\geq 1$, we take a polynomial $P(X) \in (\lF _q[T])[X]$ with $H(P)\leq H, \deg _X P\leq n$, and $|P(\xi )|=w_n(\xi ,H)$.
	There exists $F(X,Y)\in (\lF _q[T])[X,Y]$ which is an irreducible primitive polynomial in $X$ and $Y$ such that $F(\xi ,\eta )=0$.
	We write 
	\begin{equation*}
	F(X,Y)=\sum _{i=0}^{M}\sum _{j=0}^{N}a_{i j}X^i Y^j=\sum _{i=0}^{M}B_i(Y)X^i,
	\end{equation*}
	where $a_{i j}\in \lF _q[T], B_i(Y)\in (\lF _q[T])[Y]$, and $B_M(Y) \neq 0$.
	Since there exists $y\in \lF _q(T)$ such that $P(X)$ and $F(X,y)$ have no common root, it follows that the resultant $R(Y)=\Res _X(P(X),F(X,Y))$ is non-zero polynomial in $(\lF _q[T])[Y]$.
	Then we obtain $\deg _Y R(Y)\leq nN$ and there exists an integer $c_1\geq 1$ such that $H(R)\leq c_1 H^M$.
	By the basic property of resultants (see e.g.\ \cite[p.199--200]{Lang02}), there exist polynomials $g(X,Y), h(X,Y) \in (\lF _q[T])[X,Y]$ and an integer $c_2\geq 1$ such that $R(Y)=P(X)g(X,Y)+F(X,Y)h(X,Y)$ and all of the absolute values of the coefficients of $g(X,Y)$ are less than or equal to $c_2H^{M-1}$.
	Then we have $R(\eta )=P(\xi )g(\xi ,\eta )$ and $|g(\xi ,\eta )|\leq c_3H^{M-1}$ for some integer $c_3\geq 1$.
	Therefore, we obtain $w_n(\xi )\leq M-1+M w_{n N}(\eta )$ and $w(\xi )\leq MN w(\eta )$.
	
	We change a role of $\xi $ and $\eta $, which implies $w_n(\eta )\leq N-1+N w_{n M}(\xi )$ and $w(\eta )\leq MN w(\xi )$.
	This completes the proof.
\end{proof}

\subsection*{Acknowledgements}
The author would like to express his gratitude to Prof.~Shigeki Akiyama and Prof.~Hajime Kaneko for helpful comments.
The author would like to thank Prof.~Yann Bugeaud for several comments.
The author would like to thank the referee.

\end{document}